    \theoremstyle{plain}
    \newtheorem{thm}{Theorem}
    \newtheorem{prop}[thm]{Proposition}
  \DeclareMathOperator{\sym}{Isom}
  \newcommand{\set}[1]{\{ #1 \}}           
  \newcommand{\setofall}{\{\,}
  \newcommand{\suchthat}{\mid}
  \newcommand{\setend}{\,\}}
  \newcommand{\iso}{\cong}                 
  \newcommand{\gp}[1]{\left \langle #1 \right \rangle}  
  \DeclareMathOperator{\aut}{Aut}
  \DeclareMathOperator{\stab}{Stab}
  \newcommand{\abs}[1]{\left| #1 \right|}  
  \newcommand{\numsys}[1]{\mathbb{#1}}     
  \newcommand{\zmod}[1]{\ensuremath{\numsys{Z}_{#1}}}
  \newcommand{\R}{\ensuremath{\numsys{R}}}
  \DeclareMathOperator{\fix}{Fix}
  \newcommand{\cube}{\mathcal{C}}
  \newcommand{\T}{\mathcal{T}}             
  \newcommand{\st}{G_{\mathcal{T}}}        
\begin{document}
\onehalfspacing
\begin{center}
\Large
Unfoldings of the Cube
\end{center}

\begin{flushright}
Richard Goldstone\\
Manhattan College\\
Riverdale, NY 10471\\
\verb+richard.goldstone@manhattan.edu+

\vspace{2 mm}

Robert Suzzi Valli\\
Manhattan College\\
Riverdale, NY 10471\\
\verb+robert.suzzivalli@manhattan.edu+
\end{flushright}

\begin{abstract}
Just how many different connected shapes result from slicing a cube along some of its edges and unfolding it into the plane?  In this article we answer this question by viewing the cube both as a surface and as a graph of vertices and edges.  This dual perspective invites an interplay of geometric, algebraic, and combinatorial techniques.  The initial observation is that a cutting pattern which unfolds the cubical surface corresponds to a spanning tree of the cube graph.  The Matrix-Tree theorem can be used to calculate the number of spanning trees in a connected graph, and thus allows us to compute the number of ways to unfold the cube.  Since two or more spanning trees may yield the same unfolding shape,  Burnside's lemma is required to count the number of incongruent unfoldings. Such a count can be an arduous task.  Here we employ a combination of elementary algebraic and geometric techniques to bring  the problem within the range of simple hand calculations.
\end{abstract}

\subsection*{Introduction}

The question of how many different shapes result from slicing a cube along some of its edges and
unfolding it into the plane arose for us during an analysis of shortest paths on the cube.  When the second author found a way to count possibilities that brought most of the problem within the range of simple hand calculations, we felt that the analysis, with its interplay of geometric, algebraic, and combinatorial techniques, was worthy of its own presentation. A second paper \cite{goldsuzz} considers the actual shortest path(s) that can be obtained.

Here is an outline of the argument.  If we think of the vertices and edges of the cube as a graph, then edge-cutting patterns that produce unfoldings are precisely spanning trees of the cube graph.  A result called the Matrix-Tree theorem provides a computation for the number of spanning trees in a graph.  From this we get the total possible number of cutting patterns and so the total possible number of  unfoldings. 

This is just the beginning of the counting problem, because indistinguishable unfoldings are  obtained from cutting patterns that are congruent under an isometry of the cube.    To count the groups of indistinguishable cutting patterns, we have assumed familiarity with the basic language of  a group~$G$ acting on a set~$X$ and two elementary results, the orbit-stabilizer theorem for finite groups
\begin{equation}
 \abs{x^G}= \frac{\abs{G}}{\abs{\stab_G(x)}},
\label{orbstabthm}
\end{equation}
and Burnside's lemma
\begin{equation} \label{e:bl0}
 \text{\# orbits in $X$} = \frac{1}{\abs{G}} \sum_{g \in G} \abs{\fix(g)}.
\end{equation}
In these formulas, we use the notation from~\cite{jGalC06},
\[
 \stab_G(x)=\setofall g \in G  \suchthat x^g=x \setend \le G, \text{ all the elements of~$G$ that fix a particular~$x$}
\]
(``Stab'' here for ``stabilizer'').
 More generally, for any subset $A \subseteq X$,
\[
 \stab_G(A)=\setofall g \in G  \suchthat a^g \in A \text{ for all $a \in A$} \setend \le G.
\]
Note the requirement~$a^g \in A$.  We do not require $a^g=a$.  We also have
\[
\fix(g) = \setofall x \in X \suchthat x^g=x  \setend \subseteq X, \text{ all the elements of~$X$ that are fixed by a particular~$g$.}
\]

We alert the reader to a possibly unfamiliar notation: it is fairly common in permutation group theory to write functions on the right as exponents (see, for example, \cite{jDbMP96}), so~$x^f$ rather than~$f(x)$, and we have adopted that convention here.   That notation is extended in the orbit-stabilizer theorem statement, in which
\[
 x^G=\setofall x^g \suchthat g \in G \setend = \text{the orbit of~$x$ under the $G$-action.}
\]  

Burnside's lemma requires us to find, for each isometry of the cube, the number of spanning trees that are  invariant under that isometry.  This is where the rubber hits the road, since counting invariant spanning trees for all~48 isometries is still a big task.  We manage to cut that job down to reasonable size by finding many isometries to exclude from the party because they don't have what it takes to have invariant spanning trees.

The unfolding count  is not new and has been found by various authors in various ways. \cite{fBmP98, mJ1975, pT84}  Many authors have arrived at the strategy of using Burnside's lemma to count orbits of the cube isometry group acting on the set of spanning trees of the cube graph, the differences being in how the the sum of fixed points is analyzed. The published works we have been able to find have broader goals than just the analysis of the cube situation. With the aim of greater generality in the either the type and/or the dimension of the polyhedron, other authors have relied on more advanced combinatorial and geometric techniques and have been less interested in supplying specific details for the case of the cube. For this reason, the present work is more elementary and so is a good introduction to further results on unfolding in the literature.

\subsection*{Unfolding the cube}
We let~$\cube$ denote the cube, but confess to thinking about~$\cube$ in two different ways without making corresponding notational distinctions.  Sometimes,~$\cube$ denotes the cubical surface in~$\R^3$.  In this case, we speak of isometries of~$\cube$.   At other times,~$\cube$ denotes the graph formed by the vertices and edges of the cube.  In this case we speak of automorphisms, rather than isometries, of~$\cube$.  The context in which~$\cube$ appears should always make it clear which view of~$\cube$ is operative.   

As mentioned in the introduction, we will need the graph-theoretic notion of a \emph{spanning tree} of a graph~$\mathcal{G}$. A spanning tree is a subgraph that contains all the vertices of~$\mathcal{G}$ (this is the spanning part), has no circuits, and is connected (this is the tree part).

If a collection of squares in the plane has the property that whenever two squares intersect, the intersection is precisely an edge of both squares, we shall say the squares are \emph{joined along edges}.  If the boundary of the collection has a single component, we shall call the collection \emph{connected}. If a connected planar arrangement of six squares, joined along edges, folds up into a cube, we shall call the arrangement an \emph{unfolding}.  The most common term in the literature for an unfolding is a  \emph{net}. \cite{adA58,fBmP98,gcS75} However, we find ``net'' to be lacking in descriptive content and possibly a mistranslation of the German \emph{netz,} and so prefer ``unfolding.''

We may view the unfolding as having been created by cutting along
some of the edges of the cube, and this is where spanning trees come in.
 
\begin{prop}
\label{prop1}
In order to get an unfolding of the cube, the edge cutting pattern~$\T$ must be a spanning tree of~$\cube$.
\end{prop}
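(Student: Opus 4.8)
The plan is to verify, for the cut set~$\T$ viewed as a subgraph of the cube graph~$\cube$, the three defining properties of a spanning tree: that it is \emph{spanning} (contains every vertex), \emph{acyclic}, and \emph{connected}. Throughout I use that the cubical surface is a topological sphere with $8$ vertices, $12$ edges, and $6$ faces, and that an unfolding is by definition a planar figure built from the six faces whose boundary is a single closed curve; such a figure is a topological disk, of Euler characteristic~$1$.

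First I would establish the spanning property from the local geometry at a vertex. Three square faces meet at each vertex of~$\cube$, contributing a total angle of $3 \cdot 90^\circ = 270^\circ$. If all three edges at some vertex~$v$ were left uncut, the three faces around~$v$ would remain joined into a cone of total angle $270^\circ \neq 360^\circ$, which cannot be laid flat in the plane. Hence at least one edge incident to~$v$ must be cut, for every~$v$; equivalently, $\T$ meets all $8$ vertices.

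Next I would show~$\T$ is acyclic, using the topology of the sphere. Any cycle in~$\T$ is a simple closed curve on the cubical surface, and on a sphere every such curve separates the surface into two regions. Cutting along all edges of the cycle therefore severs every path across the curve, so the faces on one side become completely detached from those on the other; the unfolding would then consist of at least two pieces, forcing more than one boundary component and contradicting connectedness. Thus~$\T$ contains no cycle.

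Finally I would pin down the number of cut edges and conclude. Writing $t = \abs{\T}$, a short local count shows that a cube vertex with $k$ incident cut edges splits into $k$ corners of the flat figure, so the figure has $\sum_{v} k_v = 2t$ vertices; meanwhile each cut edge contributes two boundary edges and each uncut edge one interior edge, giving $t + 12$ edges and $6$ faces. Euler's formula for a disk, $2t - (t+12) + 6 = 1$, then yields $t = 7$. Since~$\T$ is an acyclic subgraph meeting all $8$ vertices and having $7$ edges, it has $8 - 7 = 1$ connected component, so it is connected, and therefore a spanning tree. I expect the main obstacle to be connectivity of~$\T$: the angle-deficit and separation arguments dispatch spanning and acyclicity cleanly, but connectivity is not a local condition, and the cleanest route to it is the global edge count above, whose bookkeeping — especially the vertex count of the flattened figure — is the one step requiring genuine care.
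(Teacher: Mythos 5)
Your proposal is correct, but it follows a genuinely different route from the paper's proof, which is only three sentences long: the boundary of the unfolding folds up onto the cutting pattern~$\T$, so $\T$ is the image of a single closed curve and hence is connected and without circuits; and since every vertex of the cube must be represented by at least one vertex of the unfolding, $\T$ spans. In other words, the paper reads the connectedness of~$\T$ directly off the connectedness of the boundary curve, whereas you obtain connectedness indirectly, by first pinning down $\abs{\T}=7$ with an Euler characteristic computation and then invoking the fact that an acyclic subgraph containing all $8$ vertices and having $7$ edges has exactly one component. Your other two steps make precise what the paper leaves implicit: the $270^{\circ}$ cone-angle argument justifies the paper's bare assertion that every cube vertex is ``represented'' in the unfolding, and the Jordan-curve separation argument justifies ``without circuits.'' What the paper's approach buys is brevity and directness --- no Euler formula is needed, and the count of seven cut edges then falls out of the tree property ($n$ vertices force $n-1$ edges) rather than being an input; what yours buys is rigor and self-containedness, each claim resting on a concrete principle (local flatness, separation on the sphere, Euler's formula for a disk), and it delivers the edge count $7$ as a by-product of topology alone. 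One small caution on your bookkeeping step: the counts of $2t$ corners and $t+12$ edges should be understood intrinsically, as cells of the cut-open surface, rather than as point sets in the plane, since two distinct corners of the flattened disk can in principle occupy the same planar point; the Euler characteristic is intrinsic, so your conclusion is unaffected, but the phrase ``the figure has $\sum_{v} k_v = 2t$ vertices'' deserves that clarification.
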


\begin{proof}
The boundary of the unfolding folds up into the cutting pattern~$\T$ on the cube.  Since the boundary is connected,~$\T$ must be a connected subgraph of~$\cube$ without circuits. Since every vertex of the cube must be represented by at least one vertex of the unfolding, $\T$ must be a spanning subgraph of~$\cube$,  and so~$\T$ is a spanning tree of~$\cube$.
\end{proof}

Figure~\ref{f:lcubnet} depicts an unfolding and the corresponding spanning tree in~$\cube$.  In order to label the faces clearly,~$\cube$ is depicted as viewed ``through'' the ``top'' face~\textbf{2}, which consequently does not appear as a labeled face in the diagram.  The edges of~$\cube$ drawn with double lines are the edges of the corresponding spanning tree.

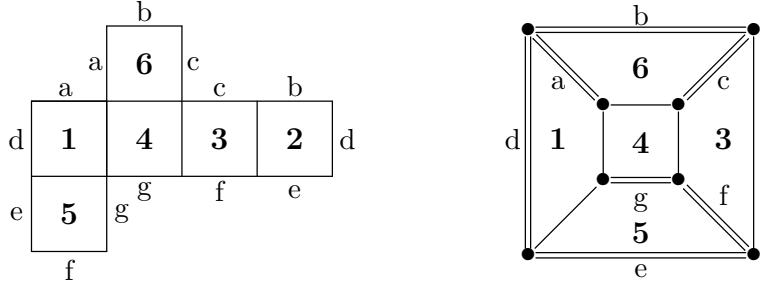
\begin{figure}[ht]      
\[
\begin{xy}
   (0,0)*{\begin{xy}(0,0)*{}="A"; (10,0)*{}="B"; (10,10)*{}="C";  (20,10)*{}="D"; (30,10)*{}="E"; (40,10)*{}="F";
         (40,20)*{}="G"; (30,20)*{}="H"; (20,20)*{}="I"; (20,30)*{}="J"; (10,30)*{}="K"; (10,20)*{}="L";
         (0,20)*{}="M"; (0,10)*{}="N";
         {\ar @{-} "A" ; "B"}; {\ar @{-} "B" ; "C"}; {\ar @{-} "N" ; "F"}; {\ar @{-} "F" ; "G"}; {\ar @{-} "G" ; "M"};
         {\ar @{-} "D" ; "J"}; {\ar @{-} "J" ; "K"}; {\ar @{-} "K" ; "C"}; {\ar @{-} "L" ; "M"}; {\ar @{-} "M" ; "A"};
         {\ar @{-} "L" ; "C"}; {\ar @{-} "E" ; "H"};
         (8.5,25)*{\text{\small a}}; (4.5, 21.5)*{\text{\small a}}; (15,32)*{\text{\small b}};
         (21.5,25)*{\text{\small c}}; (25,21.5)*{\text{\small c}}; (35,22)*{\text{\small b}}; (42,15)*{\text{\small d}};
         (35,8)*{\text{\small e}}; (25,8)*{\text{\small f}}; (15,8)*{\text{\small g}}; (12,5)*{\text{\small g}};
         (5,-2.5)*{\text{\small f}}; (-2,5)*{\text{\small e}}; (-2,15)*{\text{\small d}};
         (5,5)*{\textbf{5}}; (5,15)*{\textbf{1}}; (15,15)*{\textbf{4}}; (25,15)*{\textbf{3}}; (35,15)*{\textbf{2}}; (15,25)*{\textbf{6}};
         \end{xy}};
   (60,0)*{\begin{xy}
       (0,0)*{\bullet}="A"; (30,0)*{\bullet}="B"; (30,30)*{\bullet}="C"; (0,30)*{\bullet}="D";
       (10,10)*{\bullet}="E"; (20,10)*{\bullet}="F"; (20,20)*{\bullet}="G"; (10,20)*{\bullet}="H";
       {\ar @{=} "A"; "B"}; {\ar @{-} "B"; "C"}; {\ar @{=} "C"; "D"}; {\ar @{=} "D"; "A"};
       {\ar @{=} "E"; "F"}; {\ar @{-} "F"; "G"}; {\ar @{-} "G"; "H"}; {\ar @{-} "H"; "E"};
       {\ar @{-} "A"; "E"}; {\ar @{=} "B"; "F"}; {\ar @{=} "C"; "G"}; {\ar @{=} "D"; "H"};
       (15,3)*{\textbf{5}}; (26,15.5)*{\textbf{3}}; (15,25)*{\textbf{6}}; (4,15.5)*{\textbf{1}}; (15,15)*{\textbf{4}};
       (15,-2)*{\text{\small e}};  (15,32)*{\text{\small b}}; (26,8)*{\text{\small f}}; (-2,15.5)*{\text{\small d}};
       (26,23)*{\text{\small c}}; (4,23)*{\text{\small a}}; (15,7)*{\text{\small g}}
       \end{xy}}
\end{xy}
\]
 \caption{\label{f:lcubnet}An  unfolding of the cube and its spanning tree in $\cube$}  
\end{figure}

The import of Proposition~\ref{prop1} is that the number of different ways to cut open the cube and unfold it into the plane is equal to the number of spanning trees of~$\cube$.  The following theorem provides a general approach to counting the spanning trees of a connected graph. (For a proof, see, for example, \cite{dmCmDhS79} or  \cite{jhvLrmW94}.) 

\begin{thm}[Matrix-Tree Theorem]
Let~$\mathcal{G}$ be a connected graph on $n$ vertices.  Let~$A$ be the adjacency matrix of~$\mathcal{G}$ and let~$D$ be a diagonal matrix whose diagonal contains the degrees of the corresponding vertices of~$\mathcal{G}$. Then the number of spanning trees of~$\mathcal{G}$ is the determinant of any~$(n-1) \times (n-1)$ principal submatrix of~$D-A$.
\end{thm}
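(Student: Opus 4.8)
The plan is to realize the matrix $D-A$ as a product involving the oriented incidence matrix of $\mathcal{G}$, and then to extract the spanning-tree count via the Cauchy--Binet formula. First I would fix an arbitrary orientation of each edge and form the incidence matrix $B$, an $n \times m$ matrix (where $m$ is the number of edges) whose column for an oriented edge $e$ has entry $+1$ in the row of the head of $e$, entry $-1$ in the row of the tail, and zeros elsewhere. A direct entrywise computation then shows that $B B^T = D-A$: the diagonal entry in row $v$ counts the edges incident to $v$, giving the degree, while the off-diagonal entry in position $(u,v)$ records $-1$ for each edge joining $u$ and $v$, giving $-A_{uv}$.

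Next I would relate a principal submatrix of $D-A$ to a submatrix of $B$. Deleting row $i$ from $B$ produces an $(n-1) \times m$ matrix $B_0$, and one checks immediately that $B_0 B_0^T$ is exactly the principal submatrix of $D-A$ obtained by deleting both row $i$ and column $i$. Since the deleted row is arbitrary, it suffices to compute $\det(B_0 B_0^T)$ for this one submatrix; the equality of all principal $(n-1)\times(n-1)$ minors will follow once their common value is identified as the spanning-tree count. Because $\mathcal{G}$ is connected we have $m \ge n-1$, so the Cauchy--Binet formula applies and gives
\[
 \det(B_0 B_0^T) = \sum_{S} \left( \det B_0[S] \right)^2,
\]
where $S$ ranges over all choices of $n-1$ columns, equivalently $n-1$ edges, and $B_0[S]$ is the corresponding square submatrix.

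The crux of the argument is the combinatorial lemma that $\det B_0[S] \in \{-1, 0, +1\}$, equal to $\pm 1$ precisely when the $n-1$ edges indexed by $S$ form a spanning tree of $\mathcal{G}$. A set of $n-1$ edges on $n$ vertices spans a tree if and only if it is acyclic; if $S$ contains a cycle, the corresponding columns of $B$ satisfy a signed linear dependence running around the cycle, forcing $\det B_0[S]=0$. Conversely, when $S$ is a spanning tree I would order its edges by repeatedly pruning a leaf and expand the determinant along the successive leaf rows, each contributing a single $\pm 1$ entry, so the determinant telescopes to $\pm 1$. Combining this with Cauchy--Binet, every spanning tree contributes $(\pm 1)^2 = 1$ and every other $S$ contributes $0$, so $\det(B_0 B_0^T)$ equals the number of spanning trees, independent of which row was deleted.

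I expect the main obstacle to be this incidence-matrix lemma, specifically the clean verification that each maximal minor is $0$ or $\pm 1$. The acyclic-implies-unimodular direction requires the leaf-pruning induction (or, more slickly, an appeal to total unimodularity of the incidence matrix), while the cycle-implies-singular direction requires exhibiting the correct signed dependence among the columns around a cycle, which is where the chosen edge orientations must be tracked with care.
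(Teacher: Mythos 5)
Your proposal is correct, but it cannot be compared against a proof in the paper for a simple reason: the paper does not prove the Matrix--Tree theorem at all. The authors state it as a known result and refer the reader to the literature (Cvetkovi\'{c}--Doob--Sachs and van Lint--Wilson), then simply apply it to the cube graph with $D = 3I$ to get the count of 384 spanning trees. What you have written is, in essence, the classical proof that appears in those cited references: factor the Laplacian as $D - A = BB^T$ with $B$ the oriented incidence matrix, delete a row, apply Cauchy--Binet, and invoke the lemma that each maximal minor of the reduced incidence matrix is $\pm 1$ when the chosen edges form a spanning tree and $0$ otherwise. Your outline of that lemma is sound: a cycle among the chosen edges gives a signed linear dependence of the columns (hence a zero minor), and for a tree the leaf-pruning expansion telescopes to $\pm 1$; the only detail worth making explicit is that at each stage you must expand along a leaf \emph{other than} the deleted vertex $i$, which is always available since a tree on at least two vertices has at least two leaves. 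You also correctly dispose of the word ``any'' in the statement, since the row deleted from $B$ was arbitrary, so all $(n-1) \times (n-1)$ principal minors share the common value equal to the number of spanning trees. In short: your argument is complete modulo routine detail, and it supplies exactly the standard proof that the paper chose to outsource to its bibliography.
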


We used a computer algebra system to apply the Matrix-Tree theorem with~$A$ the~$8 \times 8$ adjacency matrix of the cube graph (see Figure~\ref{f:lcubnet}),~$I$ the $8 \times 8$ identity matrix,  and~$D=3I$ since every vertex of the cube graph is of degree~3.  The result is that \emph{there are 384 ways to unfold the cube.}

The fact that there are~384 cutting patterns does not mean that there will be~384 unfoldings that are geometrically incongruent in the plane. In particular, if an isometry of the cube carries one cutting pattern to another, then the unfoldings must be congruent. Thus, there cannot be more unfolded shapes then there are orbits of cutting patterns under the action of~$\sym(\cube)$, the full isometry group of the cube.  More precisely, let~$\Upsigma$ be the set of all spanning trees of~$\cube$ and let~$G=\sym(\cube)$. Then~$\Upsigma$ has~384 elements, and~$G$ acts on~$\Upsigma$.  The number of orbits in~$\Upsigma$ under this action is an upper bound for the number of distinct unfoldings of~$\cube$.

The standard tool for computing the number of orbits under a group action is Burnside's lemma~\eqref{e:bl0}, which in the present context says
\begin{equation} \label{e:bl}
 \text{\# orbits in $\Upsigma$} = \frac{1}{\abs{G}} \sum_{g \in G} \abs{\fix(g)}.
\end{equation}
To use Burnside's lemma, we have to count, for each  of the~48 isometries of the cube, how many of the 384 spanning trees of the cube are invariant.

The next section lays the groundwork for this count by recalling some of the basic properties of the isometry group of the cube, and the section after that establishes facts that enable us to perform this count relatively easily by hand.

\subsection*{Overview of cube symmetry}
It is well-known that~$G=\sym(\cube)$ is isomorphic to~$S_4 \times \zmod{2}$ and so is of order~48.  See, for example, Chapters~8 and~10 of \cite{maA88}. The direct factor~$S_4$ is isomorphic to the group of rotations of the cube, a result that depends on the observation that any permutation of the four space diagonals of the cube can be carried out by rotations. 

There are three types of non-identity rotations  in~$G $:
\begin{enumerate}[\indent (Rot 1)]
\item 
Rotations through an axis perpendicular to a pair of opposite faces of~$\cube$ and through the centers of those faces.  The angles of rotation are multiples of $90^{\circ}$.
\item 
Rotations around an axis through the midpoints of a pair of opposite edges.  The angles of rotation are multiples of~$180^{\circ}$.
\item 
Rotations around one of the space diagonals of the cube.  The angles of rotation are multiples of~$120^{\circ}$.
\end{enumerate}

\begin{figure}[h]
\centering
\begin{overpic}[scale=0.3]{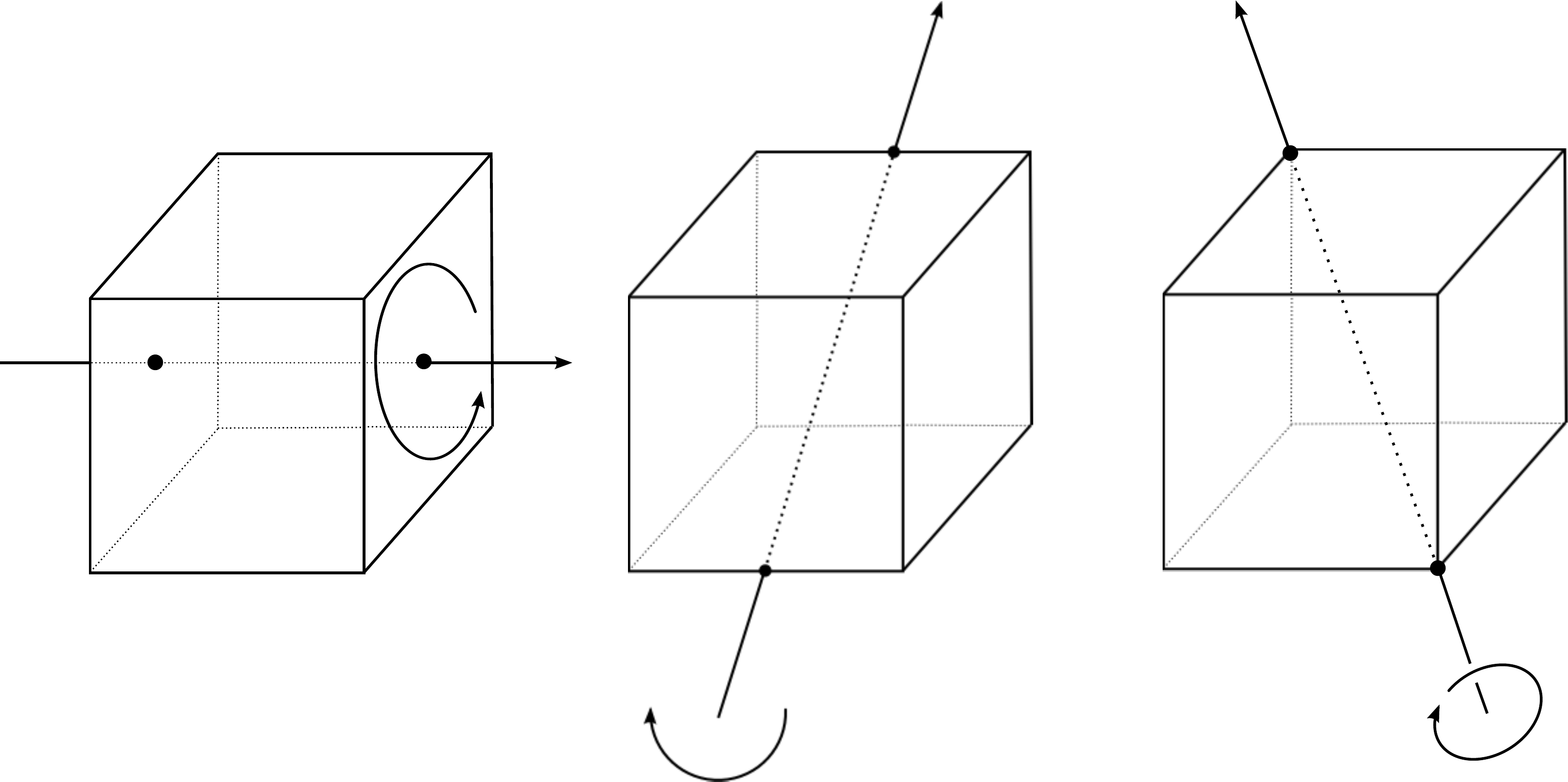}
\put(23,10.5){\footnotesize 1}
\put(32,21.5){\footnotesize 2}
\put(32,40.5){\footnotesize 3}
\put(21.5,31.5){\footnotesize 4}
\put(4,10.5){\footnotesize 5}
\put(13.5,21.5){\footnotesize 6}
\put(13,40.5){\footnotesize 7}
\put(3.5,30){\footnotesize 8}
\put(38.5,10.5){\footnotesize 1}
\put(57,10.5){\footnotesize 2}
\put(56.3,31.5){\footnotesize 3}
\put(38,30){\footnotesize 4}
\put(47.5,21.5){\footnotesize 5}
\put(66.5,21.5){\footnotesize 6}
\put(66,40.5){\footnotesize 7}
\put(47.5,40.5){\footnotesize 8}
\put(73,10.5){\footnotesize 1}
\put(93,12){\footnotesize 2}
\put(90.5,31.5){\footnotesize 3}
\put(72.5,30){\footnotesize 4}
\put(81.5,21.5){\footnotesize 5}
\put(100.5,21.5){\footnotesize 6}
\put(100.5,40.5){\footnotesize 7}
\put(82.5,41){\footnotesize 8}
\end{overpic}
\caption{Cube Rotation Isometry Types Rot~1, Rot~2, and Rot~3.} \label{f:curots}
\end{figure}

The table below gives typical cycle structure for each rotation type and the corresponding cycle structure for the representation in~$S_4$.

\[
\begin{array}{llcc} \hline\noalign{\smallskip}
\text{ Rotation Type} &\text{Rotations} \subset S_8 & \text{Effect on Space Diagonals} & \text{Rotations}=S_ 4 \\ \noalign{\smallskip}\hline\noalign{\medskip}
\text{(Rot 1)}\quad 90^\circ
& 
\rho_1=(1234)(5678) 
& 
\begin{pmatrix}  
[17] & [28] & [35] & [46]\\[0.3em]
[28] & [35] & [46] & [17]
\end{pmatrix}
&
(1234)
\\[1.5em]
\text{(Rot 1)} \quad 180^\circ
& 
\rho_1^2=(13)(24)(57)(68) 
& 
\begin{pmatrix}  
[17] & [28] & [35] & [46]\\[0.3em]
[35] & [46] & [17] & [28]
\end{pmatrix}
&
(13)(24)
\\[1.5em]
\text{(Rot 2)} \quad 180^\circ
&
\rho_2=(12)(35)(46)(78) 
&
\begin{pmatrix}  
[17] & [28] & [35] & [46]\\[0.3em]
[28] & [17] & [35] & [46]
\end{pmatrix}
&
(12)
\\[1.5em]
\text{(Rot 3)} \quad 120^\circ
&
\rho_3=(136)(475)
& 
\begin{pmatrix}  
[17] & [28] & [35] & [46]\\[0.3em]
[35] & [28] & [46] & [17]
\end{pmatrix}
&
(134) \\ \noalign{\medskip}\hline
 \end{array}
\]

\medskip
We view the copy of~$\zmod{2}$ appearing in~$G \iso S_4 \times \zmod{2}$ as being generated by a map~$\alpha$ that we call the \emph{antipodal} map.  The map~$\alpha$ interchanges the endpoints of  each space diagonal of~$\cube$. Since the diagonals are unoriented for the purposes of representing the rotations of the cube, it is clear that~$\alpha$ commutes with all the elements of~$S_4$.  Composing~$\alpha=(17)(28)(35)(46)$ with the rotations described above gives the following table of results:

\[
\begin{array}{llll} \hline\noalign{\smallskip}
\text{Rotation Type}&&  \text{Rotation} \circ \alpha & \text{Isometry Type}  \\ \noalign{\smallskip}\hline\noalign{\medskip}
\text{(Rot 1)}\quad 90^\circ & \rho_1
& 
\rho_1 \alpha = (1234)(5678)\alpha = (1836)(2547)
&
\text{(Rot 1)} \circ \text{(Ref 2)}

\\[1.5em]
\text{(Rot 1)} \quad 180^\circ & \rho_1^2
& 
\rho_1^2\alpha=(13)(24)(57)(68) \alpha=(15)(26)(37)(48)
&
\text{(Ref 2)}

\\[1.5em]
\text{(Rot 2)} \quad 180^\circ & \rho_2
&
\rho_2\alpha=(12)(35)(46)(78) \alpha= (18)(27)
&
\text{(Ref 1)}

\\[1.5em]
\text{(Rot 3)} \quad 120^\circ & \rho_3
&
\rho_3\alpha=(136)(475) \alpha=(156734)(28)
& 
\text{(Rot 1)} \circ \text{(Ref 1)} \\ \noalign{\medskip}\hline

 \end{array}
\]

\medskip\noindent
We find that among the resulting isometries are two types of reflections (see Figure~\ref{f:curef}):
\begin{enumerate}[\indent (Ref 1)]
\item 
A reflection in a plane containing a pair of opposite edges.
\item 
A reflection in a plane through the midpoints of a set of four parallel edges.
\end{enumerate}
We mention these reflections because they have a role in counting arguments that follow.

\begin{figure}[h]
\centering
\begin{overpic}[scale=0.3]{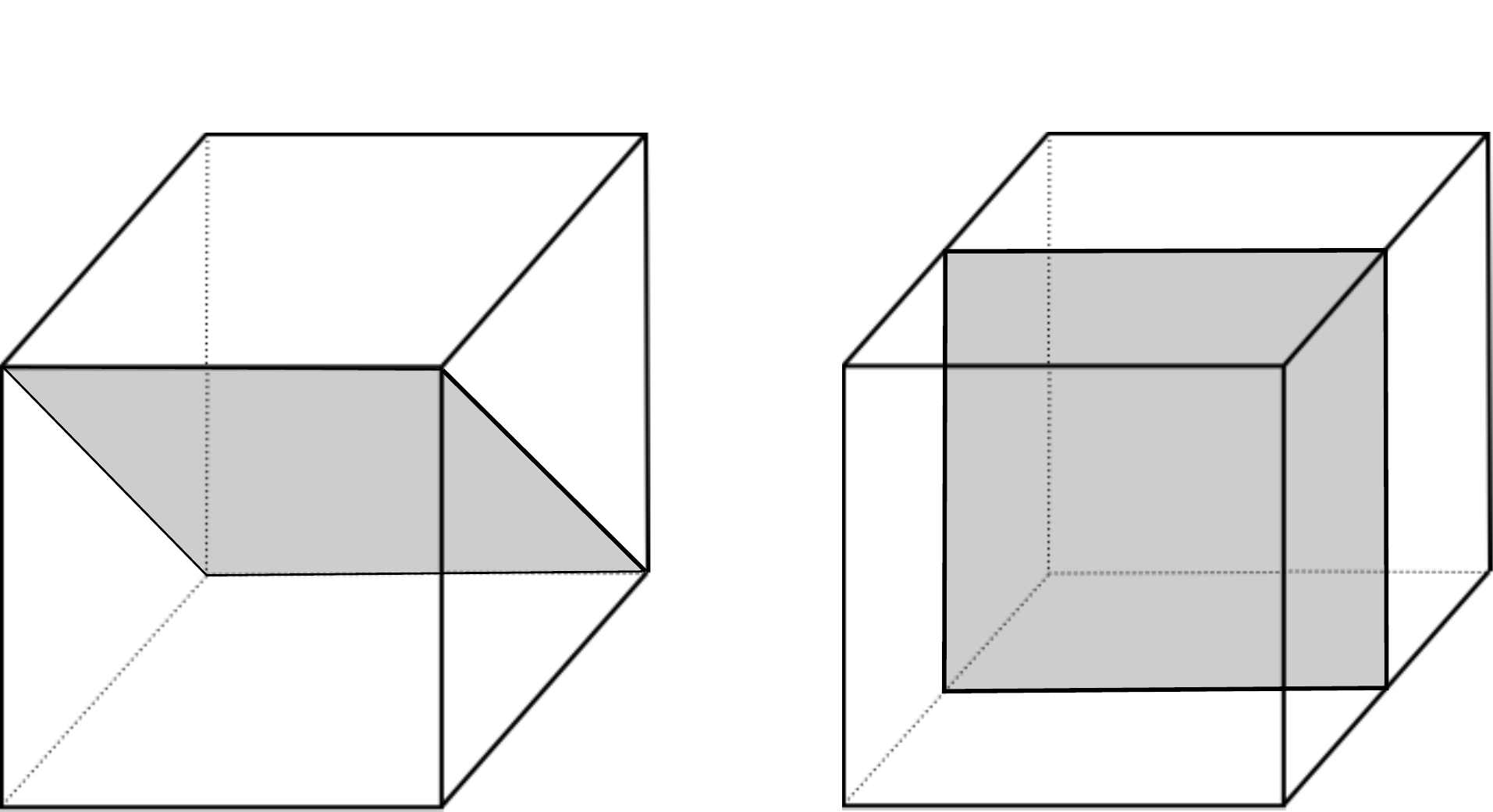}
\put(-3,-3){\footnotesize 1}
\put(31,-3){\footnotesize 2}
\put(27.5,31){\footnotesize 3}
\put(-3,31){\footnotesize 4}
\put(11.5,11.5){\footnotesize 5}
\put(43,11.5){\footnotesize 6}
\put(43.5,45){\footnotesize 7}
\put(10,45){\footnotesize 8}
\put(54,-3){\footnotesize 1}
\put(87,-3){\footnotesize 2}
\put(83.5,31){\footnotesize 3}
\put(53,31){\footnotesize 4}
\put(69,14){\footnotesize 5}
\put(99.5,11.5){\footnotesize 6}
\put(100,45){\footnotesize 7}
\put(66,45){\footnotesize 8}
\end{overpic}
\caption{Cube Reflection Isometry Types Ref~1 and Ref~2.} \label{f:curef}
\end{figure}

There are two properties of the group of isometries of the cube that we will need in order to analyze which cube isometries have invariant spanning trees.  The first property is about possible orders of certain elements.  Since~$G$ is of order~48, among the elements whose order is a multiple of~3 could, in principle, be elements of order~3,~6,~12,~24, and~48. In fact, since $G \cong S_4 \times \zmod{2}$, we can show that only the first two of these occur:

\begin{prop} \label{l:osix}
The elements of~$G$ whose order is a multiple of~3 are either of order~3 or of order~6.
\end{prop}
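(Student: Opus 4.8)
The plan is to exploit the direct product structure $G \iso S_4 \times \zmod{2}$ head-on, using the standard fact that the order of an element of a direct product is the least common multiple of the orders of its components. Writing a general element of $G$ as a pair $(\sigma,\epsilon)$ with $\sigma \in S_4$ and $\epsilon \in \zmod{2}$, its order is $\mathrm{lcm}(\mathrm{ord}(\sigma),\mathrm{ord}(\epsilon))$, so the whole problem reduces to knowing the possible orders in each factor.

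First I would record the orders available in the two factors. The element $\epsilon$ lies in $\zmod{2}$, so $\mathrm{ord}(\epsilon)$ is either $1$ or $2$; in particular it is never a multiple of $3$. For the $S_4$ factor I would classify element orders by cycle type: the identity has order $1$, transpositions and products of two disjoint transpositions have order $2$, the $3$-cycles have order $3$, and the $4$-cycles have order $4$. Hence the only orders occurring in $S_4$ are $1$, $2$, $3$, and $4$, and the unique one divisible by $3$ is $3$ itself.

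With these facts in hand the conclusion is immediate. If $\mathrm{ord}((\sigma,\epsilon)) = \mathrm{lcm}(\mathrm{ord}(\sigma),\mathrm{ord}(\epsilon))$ is a multiple of $3$, then since $\mathrm{ord}(\epsilon)\in\{1,2\}$ contributes no factor of $3$, the factor $\mathrm{ord}(\sigma)$ must itself be divisible by $3$, forcing $\mathrm{ord}(\sigma)=3$. The order of the element is then $\mathrm{lcm}(3,1)=3$ when $\epsilon$ is trivial and $\mathrm{lcm}(3,2)=6$ when $\epsilon$ is the antipodal map~$\alpha$, and these are the only two possibilities.

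I expect no serious obstacle; the argument is essentially bookkeeping. The one step carrying any content is the enumeration of element orders in $S_4$, which rests on the fact that a permutation's order is the lcm of its cycle lengths. As a sanity check, this matches the tables above: the order-$3$ elements are precisely the Rot~3 rotations (the $120^\circ$ rotations $\rho_3$ about a space diagonal), and composing such a rotation with $\alpha$ yields the order-$6$ elements, for instance $\rho_3\alpha=(156734)(28)$, a $6$-cycle disjoint from a transposition.
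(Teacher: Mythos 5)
Your proof is correct and follows essentially the same route as the paper's: both rest on the decomposition $G \iso S_4 \times \zmod{2}$ together with the classification of cycle types in $S_4$, which shows $3$-cycles are the only elements there with order divisible by~$3$. Your explicit use of the lcm formula for orders in a direct product just makes precise what the paper's proof (writing elements as $\rho$ or $\rho\alpha$ and reading off orders $3$ and $6$) leaves implicit.
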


\begin{proof}
The possible types of disjoint cycles in~$S_4$ are~$(1\;2)$ and~$(1\;2)(3\;4)$, both of order~2,~$(1\;2\;3)$ of order~3, and~$(1\;2\;3\;4)$ of order~4. So the only elements of order a multiple of~3 in~$S_4$ are the 3-cycles, and these have order~3. The direct product decomposition $G \cong S_4 \times \zmod{2}$ means that the elements of $G$ can be written uniquely as either $\rho$ or $\rho\alpha$ for some $\rho \in S_4$ and $\alpha \in \zmod{2}$, and we have just seen that~$\rho$ can only be a 3-cycle. Consequently, the elements of order a multiple of~3 in~$G$ are either of order~3 (if of form $\rho$) or of order~6 (if of form $\rho\alpha$).
\end{proof}

The second property of the cube isometry group we will need concerns edge stabilizers. The group~$G$ operates transitively on the set of edges of~$\cube$, meaning that given any pair of edges, there is at least one isometry that carries one edge to  the position of the other edge.  In fact, the face-centered rotations by themselves are enough to move any edge to, say, the position of the bottom edge in the front face of~$\cube$. For any edge~$e$ of~$\cube$, recall that~$\stab_G(e)$ denotes the subgroup of~$G$ that carries~$e$ to itself.  Note that since we view the edges of~$\cube$ as unoriented,  group elements that interchange the endpoints of the edge~$e$ belong to~$\stab_G(e)$.

\begin{prop} \label{l:stab4}
$\abs{\stab_G(e)}=4$.
\end{prop}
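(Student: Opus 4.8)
The plan is to read this off the orbit--stabilizer theorem~\eqref{orbstabthm}, using the transitivity of the edge action recorded just above. Because~$G$ acts transitively on the edges of~$\cube$, there is a single edge orbit, so the orbit~$e^G$ of our chosen edge is the entire edge set. A cube has exactly~$12$ edges, hence~$\abs{e^G}=12$. Feeding~$\abs{G}=48$ and~$\abs{e^G}=12$ into~\eqref{orbstabthm} then yields
\[
 \abs{\stab_G(e)}=\frac{\abs{G}}{\abs{e^G}}=\frac{48}{12}=4.
\]

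There is no real obstacle along this route: the computation rests only on the order~$\abs{G}=48$ from the symmetry overview, on transitivity of the action on edges, and on the elementary fact that the cube has~$12$ edges. The single point that deserves an explicit sentence is why transitivity forces~$\abs{e^G}=12$, namely that transitivity puts all edges in one orbit, so that orbit must consist of all~$12$ of them.

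For geometric insight I would also identify the four elements of~$\stab_G(e)$ by hand, since this is the content the proposition is really meant to supply for the later counting. Viewing~$e$ as unoriented, any isometry fixing~$e$ either fixes both endpoints of~$e$ or interchanges them, and it is either orientation preserving or orientation reversing. The identity fixes the endpoints and preserves orientation; the~$180^\circ$ rotation of type~(Rot~2) about the axis through the midpoints of~$e$ and the opposite edge swaps the endpoints while preserving orientation; the reflection of type~(Ref~1) in the plane containing~$e$ and its opposite parallel edge fixes the endpoints while reversing orientation; and the product of these last two swaps the endpoints and reverses orientation. These four isometries realize all four combinations of the two binary invariants, so they are pairwise distinct, and they recover exactly the count~$4$ obtained above.

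The only place where this explicit approach needs an argument, rather than mere bookkeeping, is in confirming that the list of four is complete. Here I would observe that an orientation-preserving isometry fixing both endpoints of~$e$ would be a rotation about the line through~$e$, and no nontrivial rotation about an edge line preserves~$\cube$; thus that combination is attained only by the identity. Consequently the pair of invariants embeds~$\stab_G(e)$ into~$\zmod{2}\times\zmod{2}$, capping its order at~$4$, a bound the explicit list attains. This gives a second, self-contained derivation of~$\abs{\stab_G(e)}=4$ that does not invoke the edge count.
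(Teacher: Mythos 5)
Your first paragraph is exactly the paper's proof: orbit--stabilizer together with transitivity gives $\abs{e^G}=12$, and then $\abs{\stab_G(e)}=48/12=4$. Your supplementary explicit enumeration is also correct, and it essentially reproduces the paper's own footnote, which notes that $\stab_G(e) \iso \zmod{2} \times \zmod{2}$ with generators of types Rot~2 and Ref~2; your generating pair (Rot~2 and Ref~1) is the same Klein four-group, since the product of the Rot~2 rotation and the Ref~1 reflection is precisely the Ref~2 reflection in the plane perpendicular to $e$ through its midpoint. Your completeness argument (the kernel of the map to $\zmod{2}\times\zmod{2}$ is trivial because no nontrivial rotation about an edge line preserves $\cube$) is sound and makes the footnote's claim self-contained, which the paper does not bother to do.
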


\begin{proof}
According to the orbit-stabilizer theorem~\eqref{orbstabthm}, the orbit $e^G$ of any edge $e$ under the action of $G$ must satisfy
\[
\abs{e^G}= \frac{\abs{G}}{\abs{\stab_{G}(e)}}.
\]
The fact that the action of~$G$ on the set of edges of~$\cube$ is transitive means that the entire set of~12 edges is a single orbit, that is~$\bigl|e^G \bigr| =12$.  Since~$\abs{G}=48$, any edge-stabilizer subgroup must be of order~$48/12=4$.%
\footnote{Alternatively, we can see directly that the edge-stablizer is of order~4 by  noting, with the aid of Proposition~\ref{t:3sym}, that $\stab_G(e) \cong \zmod{2} \times \zmod{2}$.  The two $\zmod{2}$-generators are of the types Rot~2 and~Ref~2.}
\end{proof}

\subsection*{Isometries of the cube with invariant spanning trees}
We are now ready to begin the process of computing the size of the
sets~$\fix(g)$,~$g \in G$, for the action of~$G=\sym(\cube)$ on the set~$\Upsigma$ of spanning trees of~$\cube$.   A spanning tree~$\T$ is invariant under~$g \in G$ if $\T^g=\T$ (recall that we are using exponential notation for functions); individual edges and vertices may be moved as long as $g$ carries $\T$ to itself.   An analogous statement holds for a single edge invariant under~$g$: the edge can either be unmoved or reversed by the action.  This situation leads to some potentially confusing terminology about invariant edges.  If we think of the edges as members of a set~$E$ whose elements are permuted by the action of~$g$, then an edge~$e\in E$ that is either unmoved or reversed geometrically is simply a \emph{fixed point} for the $g$-action on~$E$, because from the set-element perspective $e^g=e$ regardless of whether the edge is unmoved or reversed.  So an edge that is fixed by permutations of~$E$ may either be fixed or reversed by the corresponding isometries of~$\cube$, and this is why we speak of invariant edges in the geometric context.

Returning now to the question of cube isometries with invariant spanning trees, our first observation is that if a cube isometry wants to have an invariant spanning tree, it can't just juggle the edges of that tree in any old way, it has to play nice by keeping at least one edge of the tree invariant.

\begin{prop} \label{t:fe}
Let $g \in G$ and suppose $\T$ is a spanning tree of $\cube$  invariant under $g$.  Then there is at least one edge of~$\T$ that is invariant under $g$.
\end{prop}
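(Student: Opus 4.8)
The plan is to study the permutation that $g$ induces on the seven edges of the invariant tree $\T$ and to force that permutation to have a fixed point, using the convention established just above the statement that an edge fixed by the set-permutation (whether geometrically unmoved or reversed) is exactly an invariant edge. Since $\T$ is a spanning tree on the $8$ vertices of $\cube$, it has $8-1=7$ edges, so the cycle lengths of the induced permutation are positive integers summing to $7$, and each divides the order of $g$. By Proposition~\ref{l:osix} and the list of element orders available in $S_4 \times \zmod{2}$, the order of $g$ lies in $\set{1,2,3,4,6}$, and I would split into two cases according to whether $3$ divides it.

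If the order of $g$ is $1$, $2$, or $4$, then every cycle length divides a power of $2$ and so is itself $1$, $2$, or $4$. Were there no invariant edge, every cycle length would be even and their sum would be even; but the sum is $7$. This contradiction produces an invariant edge. In this case nothing is needed beyond the parity of the number~$7$.

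The remaining case, where $3$ divides the order of $g$, is the crux, because here the parity obstruction evaporates: one can write $7=2+2+3$, so a fixed-point-free cycle structure on the seven edges is not ruled out by counting alone. My plan is instead to show that such a $g$ admits \emph{no} invariant spanning tree whatsoever, so that Proposition~\ref{t:fe} holds vacuously. By Proposition~\ref{l:osix} the order is $3$ or $6$; let $h$ be the power of $g$ of order $3$ (so $h=g$ if the order is $3$ and $h=g^2$ if it is $6$). Then $h$ corresponds to a $3$-cycle in the rotation group $S_4$, that is, $h$ is a rotation of type Rot~3, a $120^\circ$ rotation about a space diagonal. Such a rotation fixes only the two endpoints of that diagonal, and these are opposite ends of a space diagonal and hence non-adjacent; the other six vertices are permuted in two $3$-cycles. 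I would then check that $h$ has no invariant edge at all on $\cube$: an invariant edge would need either both endpoints fixed (impossible, since the two fixed vertices are not adjacent) or its endpoints interchanged (impossible for a permutation of order~$3$, which contains no transposition). Consequently $h$ partitions all twelve edges of $\cube$ into orbits of size $3$, and any $h$-invariant subgraph is a union of these orbits and so has a number of edges divisible by~$3$. Since $\T$ is $g$-invariant it is invariant under the power $h$, yet $7$ is not divisible by~$3$ --- a contradiction. Hence no spanning tree is invariant under $g$, and the implication in Proposition~\ref{t:fe} is vacuously satisfied.

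I expect the order-divisible-by-$3$ case to be the main obstacle. The clean parity argument that disposes of orders $1$, $2$, and $4$ simply fails there, and the key realization is that the correct move is not to locate an invariant edge directly but to exploit the rigid $3$-cycle structure of Rot~3 on the twelve edges to prove that such isometries fix no spanning tree in the first place.
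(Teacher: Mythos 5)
Your proof is correct, but it takes a genuinely different route from the paper's. The paper works entirely inside the symmetric group: it maps $\stab_G(\T)$ into $S_7$ via its action on the seven edges, enumerates the fixed-point-free cycle types on seven letters ($7$; $5+2$; $4+3$; $3+2+2$), and kills each one by order considerations --- the last and hardest case ($3+2+2$, order $6$) requiring the orbit-stabilizer theorem together with the fact that edge stabilizers have order $4$ (Proposition~\ref{l:stab4}). You instead split on whether $3$ divides the order of $g$. For orders $1$, $2$, $4$ your parity argument (cycle lengths $2$ or $4$ cannot sum to $7$) is shorter than the paper's treatment and needs nothing but the oddness of $7$. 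For orders $3$ and $6$ you do something the paper postpones: you pass to the order-$3$ power $h$, identify it as a $120^{\circ}$ rotation about a space diagonal, observe geometrically that it has no invariant edge (its two fixed vertices are non-adjacent, and an order-$3$ permutation contains no transposition), and conclude that every $h$-invariant edge set has size divisible by $3$, which $7$ is not --- so such $g$ have no invariant spanning tree at all and the proposition holds vacuously. This buys you two things: you never need Proposition~\ref{l:stab4} or the orbit-stabilizer computation, and you obtain for free a stronger fact that the paper only extracts later (via Proposition~\ref{t:3sym}), namely that order-$3$ and order-$6$ isometries fix no spanning tree. What you give up is the genericity the paper explicitly prizes: its argument, as the authors remark after the proof, applies to any transitive action of $S_4 \times \zmod{2}$ on a $12$-element set with a subgroup preserving a $7$-element subset, whereas your order-divisible-by-$3$ case leans on specific cube geometry (space diagonals, vertex adjacency) rather than on abstract group structure alone.
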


\begin{proof}
Let~$\T$ be a spanning tree of~$\cube$. The cube, and hence the spanning tree~$\T$, has eight vertices. Since a tree with~$n$ vertices must have~$n-1$ edges,~$\T$ has seven edges. Let~$E$ denote this set of edges.  The subgroup of elements of~$G$ for which~$\T$ is an invariant spanning tree is~$\st=\stab_G(\T)$, and~$\st$ also acts on~$E$.  Since~$E$ has seven elements, this action amounts to a homomorphism (neither 1-1 nor onto) $\gamma:\st \to S_7$.  If the edges of the cube are labeled from~1 to~12 in such a way that the edges in~$\T$ are labeled~1 to~7, then the homomorphism~$\gamma$ simply ``forgets'' all cycles in any~$h \in \st$ involving the labels~8 through~12. This means that~$\st$ and its image~$(\st)^\gamma < S_7$ have exactly the same orbits in~$E$, the advantage of using~$S_7$ being that for any $h \in \st$, the orbits of~$h$ acting on~$E$ are precisely the cycles in the disjoint cycle representation of~$h^\gamma \in S_7$.

Some of the properties we've found for~$G$ also apply to~$(\st)^\gamma < S_7$.  In particular, since the order of a homomorphic image of an element must divide the order of the element, every $h^\gamma \in S_7$ has order dividing 48, and an element of~$(\st)^\gamma$ whose order is a multiple of~$3$ can only have order~3 or~6.

Our claim is that no element of~$\st$ can act on~$E$ without a fixed point.  To prove this, suppose we have an   $h \in \st$ fixing no elements of~$E$. Then~$h^\gamma \in S_7$ fixes no elements of~$E$.  In the disjoint cycle decomposition of~$h^\gamma$,  all cycles of~$h^\gamma$ would have to be of length~2 or greater, and each of the seven elements of~$E$ would have to appear in exactly one cycle of~$h^\gamma$.  Consider the possibilities:

\begin{enumerate}
\item
The element~$h^\gamma$ is a 7-cycle. This is impossible since the order of~$h^\gamma$ would be~7 and~$7 \nmid 48$. 
\item 
The element $h^\gamma$ is the  product of a 5-cycle and disjoint  2-cycle. This would mean that the order of $h^\gamma$ is~10, which is impossible since $10 \nmid 48$.
\item 
The element $h^\gamma$ is the  product of a  4-cycle and disjoint  3-cycle. This would mean that the order of $h^\gamma$ is~12, which is impossible since, as a consequence of Proposition~\ref{l:osix}, the largest  order in~$(\st)^\gamma$ that is a multiple of~3 is~6.
\item \label{i:tttc} 
The element~$h^\gamma$ is the disjoint product of two transpositions and a 3-cycle. This would mean that the order of~$h^\gamma$ is~6, which means that the order of~$h$ is divisible by~6.  In view of Proposition~\ref{l:osix}, this means that the order of~$h$ must also be~6.  But this cannot be, since for any $e \in E$, the orbit-stabilizer theorem  says that

\[
 \bigl | e^{\gp{h}} \bigr | = \frac{\abs{h}}{\abs{\stab_{\gp{h}}(e)}} = \frac{\abs{h}}{\abs{\stab_G(e) \cap \gp{h}}} = \frac{6}{\abs{\stab_G(e) \cap \gp{h}}}.
\]
In light of Proposition~\ref{l:stab4}, any common subgroup of~$\stab_G(e)$ and~$\gp{h}$ must have order dividing~4, so the only possible orders for~$\stab_G(e) \cap \gp{h}$ are~1,~2, and~4, with~4 excluded here because $4 \nmid 6$.  This means that the only possible orbit sizes for orbits of~$e$ under~$\gp{h}$ are~6 and~3. These are both impossible, not only because the assumption for this item requires two orbits of size~2, but also because the seven elements of $E$ cannot be partitioned into subsets of sizes~3 and~6.
\end{enumerate}
All the possibilities for an element~$h \in \stab_G(\T)$ to act on~$E$ without fixed points lead to contradictions, so it follows that at least one element of~$E$  must be fixed by~$h$.  
\end{proof}

It is interesting to note that this result, which is fundamental for everything that follows, is not really tightly controlled by either cube geometry or cube graph structure.  Rather, it is a generic algebraic result about group actions, namely if~$S_4 \times \zmod{2}$ acts transitively on a set of size~12, and if there is a subgroup~$H < S_4 \times \zmod{2}$ acting on a subset~$E$ of size~7, then each element of~$H$ must fix an element of~$E$. Referring back to the cube, the result would apply to any set of seven edges~$E$ that is invariant under a subgroup of the isometries of the cube, whether or not those seven edges belong to a spanning tree.

We have now seen that in order to have an invariant spanning tree, a cube isometry will have to have at least one invariant edge.  Not every isometry is up to the challenge of keeping an edge invariant, and only those dexterous isometries that can manage this task are candidates for having an invariant spanning tree.  So, for the moment, we leave the question of invariant spanning trees and ask which cube isometries have invariant edges.

\begin{prop} \label{t:3sym}
There are three types of isometries of~$\cube$ with an invariant edge: a~$180^{\circ}$ rotation around an axis joining the midpoints of a pair of opposite edges of~$\cube$ (type~\emph{Rot~2} in Figure~\ref{f:curots}), a reflection in a plane containing a pair of opposite edges of~$\cube$ (type~\emph{Ref~1} in Figure~\ref{f:curef}), and a reflection in a plane through the midpoints of a set of parallel edges of~$\cube$ (type~\emph{Ref~2} in Figure~\ref{f:curef}).
\end{prop}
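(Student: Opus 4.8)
My plan is to determine, once and for all, exactly which isometries fix a single chosen edge, and then to spread that information across all twelve edges using the transitivity of $G$ on edges. The starting observation is that a non-identity isometry $g$ has an invariant edge precisely when $g$ is a non-identity element of $\stab_G(e)$ for some edge $e$ of $\cube$ (the identity of course fixes every edge, but it is not one of the enumerated rotation/reflection types). Since $G$ acts transitively on the edges, any two edge-stabilizers are conjugate in $G$, and conjugate isometries are geometrically of the same type---conjugation carries a rotation to a rotation through the same angle and a reflection to a reflection. So the whole question reduces to identifying the types of the non-identity elements of $\stab_G(e_0)$ for one conveniently chosen edge $e_0$.

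For that single edge I would lean on Proposition~\ref{l:stab4}, which tells us $\abs{\stab_G(e_0)} = 4$; thus there are exactly three non-identity isometries fixing $e_0$, and it is enough to exhibit three distinct ones of the claimed types. The mechanism I would use to find them is that any isometry fixing $e_0$ setwise must either fix or interchange the two endpoints of $e_0$, and independently either fix or interchange the two faces of $\cube$ meeting along $e_0$. The three non-identity combinations are realized by: the $180^\circ$ rotation about the axis through the midpoints of $e_0$ and its opposite edge, which interchanges both the endpoints and the two faces (type~Rot~2); the reflection in the plane containing $e_0$ and its opposite edge, which fixes the endpoints and swaps the faces (type~Ref~1); and the reflection in the plane through the midpoint of $e_0$ perpendicular to $e_0$---equivalently, through the midpoints of the four edges parallel to $e_0$---which swaps the endpoints but fixes each face setwise (type~Ref~2). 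Each visibly carries $e_0$ to itself, and the three are distinct because one is orientation-preserving while the other two are reflections in different planes. Since there are exactly three non-identity elements available, these account for all of them.

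Finally I would transport this to every edge and check the converse. For an arbitrary edge $e = e_0^{\,h}$ we have $\stab_G(e) = h^{-1}\stab_G(e_0)h$, so its non-identity elements are conjugates of the three found above and hence again of types Rot~2, Ref~1, and Ref~2; this shows no other type can have an invariant edge. Conversely, every isometry of each of these three types does fix an edge: a Rot~2 fixes the two edges whose midpoints determine its axis, a Ref~1 fixes the two opposite edges lying in its plane, and a Ref~2 fixes the four parallel edges whose midpoints lie in its plane. The only delicate point I anticipate is the bookkeeping in the second paragraph---verifying that the three exhibited isometries genuinely lie in $\stab_G(e_0)$, are pairwise distinct, and carry the stated types---but once $\abs{\stab_G(e_0)}=4$ is supplied by Proposition~\ref{l:stab4}, the counting forces them to be the complete list and the argument closes with no case analysis over the remaining isometry types.
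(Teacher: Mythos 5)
Your proof is correct, but it takes a genuinely different route from the paper's. The paper argues by cases on the decomposition $G \iso S_4 \times \zmod{2}$: for rotations it observes geometrically that an invariant edge forces the axis through that edge's midpoint (singling out type Rot~2), and it writes every non-rotation as $\rho\alpha$ with $\alpha$ the antipodal map, so that $e^{\rho\alpha}=e$ becomes $e^{\rho}=e^{\alpha}$; the plane $\Pi$ determined by the opposite edges $e$ and $e^{\alpha}$ must then be preserved by $\rho$, leaving only two candidate axes, and explicit permutation computations identify the two compositions as the reflections of types Ref~1 and Ref~2. You instead reduce everything to a single edge-stabilizer: transitivity plus Proposition~\ref{l:stab4} gives $\abs{\stab_G(e_0)}=4$, you exhibit three distinct non-identity elements of the claimed types in $\stab_G(e_0)$, and conjugation transports the conclusion to every edge. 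One point worth noting: this is not circular, since the paper's main proof of Proposition~\ref{l:stab4} uses only transitivity and the orbit-stabilizer theorem (only the alternative argument in its footnote invokes the present proposition), so you are entitled to cite it. As for what each approach buys: yours eliminates the case analysis and the $\rho\alpha$ computations entirely, at the cost of relying on the fact that conjugation by a cube isometry preserves each geometric type --- it carries Rot~2 axes to Rot~2 axes and Ref~1/Ref~2 planes to planes of the same kind, because cube isometries carry edges to edges and midpoints to midpoints. That is the one step you assert without proof and should make explicit, though it is routine. The paper's argument is longer but more self-contained and constructive, producing explicit permutation representations of the reflections that make the identification of types concrete rather than inferred from a counting bound.
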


\begin{proof}
In order for an edge~$e$ to be invariant under a non-trivial rotation, the axis of rotation must pass through the midpoint of the edge. The only such rotations are the~$180^{\circ}$ rotations whose axes pass through the midpoints of a pair of opposite edges.

If an isometry of~$\cube$ is not a rotation, it must be of the form~$\rho\alpha$, where~$\rho$ is a rotation and~$\alpha$ is the antipodal map. If~$e$ is an edge of~$\cube$ that is fixed by~$\rho\alpha$, then~$e^{\rho\alpha}=e$ is equivalent to~$e^{\rho}=e^{\alpha}$; the rotation~$\rho$ must carry the edge~$e$ to the edge~$e^{\alpha}$.  The edges~$e$ and~$e^{\alpha}$ are opposite each other in~$\cube$ and so are parallel, hence they determine a plane $\Pi$ as in type~{Ref 1} of Figure~\ref{f:curef}, and so~$\Pi$ must be carried to itself by the rotation~$\rho$.  There are two ways this can happen: either the axis of rotation of~$\rho$ is perpendicular to~$\Pi$ as in the left drawing in Figure~\ref{f:rotperp}, or the axis of rotation is contained in~$\Pi$ as in the middle drawing in Figure~\ref{f:rotperp}.  (There are, of course, infinitely many axes perpendicular to~$\Pi$ and embedded in~$\Pi$, but the illustrated ones are the only possibilities that also interchange~$e$ and~$e^\alpha$.) In both cases,  the angle of rotation of $\rho$ must be~$180^{\circ}$ in order
to carry~$e$ to~$e^{\alpha}$.

\begin{figure}[h]
\centering
\begin{overpic}[scale=0.3]{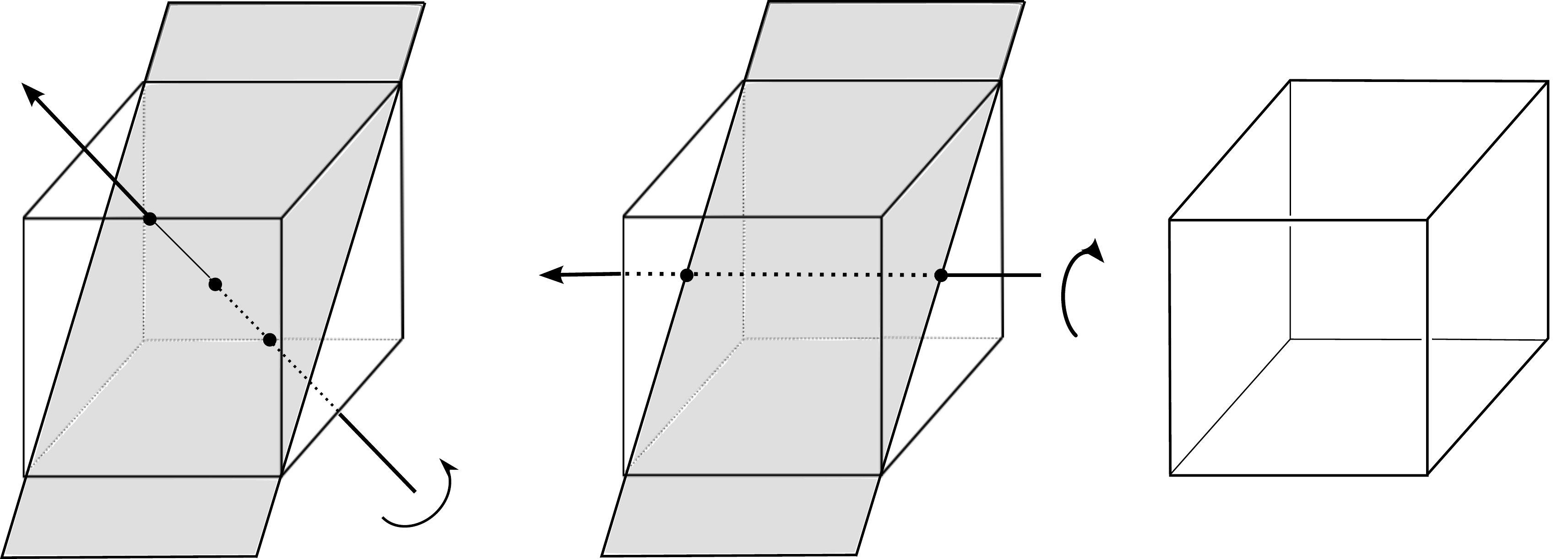}
\put(16.5,31.5){\small $e^\alpha$}
\put(55,31.5){\small $ e^\alpha$}
\put(13,24.5){$\Pi$}
\put(51,24.5){$\Pi$}
\put(8.5,3){$e$}
\put(47,3){$e$}
\put(72.5,3){\small 1}
\put(91.5,3){\small 2}
\put(92,20){\small 3}
\put(72,20){\small 4}
\put(83,15){\small 5}
\put(100,15){\small 6}
\put(99,32){\small 7}
\put(81,32){\small 8}
\end{overpic}
\caption{Rotation axes possibilities and vertex labels} \label{f:rotperp}
\end{figure}

If the axis of rotation of~$\rho$ is perpendicular to~$\Pi$, then that axis  must pass through the midpoints of the pair of opposite edges of~$\cube$ that are parallel to~$\Pi$. Using the vertex labeling given in Figure~\ref{f:rotperp}, we can write the composition~$\rho \alpha$ as
\[
 \underbrace{(1\; 7)(2 \; 8)(3 \; 4)(5 \; 6)}_\rho \, 
 \underbrace{(1 \;7 )( 2\; 8)(3 \; 5)( 4\;6 )}_\alpha
=( 3\; 6)( 4\;5 ),
\]
which is a reflection in the plane~$\Pi$, a plane containing a pair of opposite edges of~$\cube$, the second of the possibilities claimed.   

If the axis of rotation of~$\rho$ is contained in~$\Pi$, then that axis must pass through the midpoints of a pair of opposite faces of~$\cube$.  Using the vertex labeling given in Figure~\ref{f:rotperp}, we can write the composition~$\rho \alpha$ as
\[
 \underbrace{(1 \; 8)(4 \; 5)(2 \; 7)(3 \; 6)}_\rho\,
 \underbrace{(1 \;7 )( 2\; 8)(3 \; 5)( 4\;6 )}_\alpha
 = (1 \; 2)(3\;4)(5\;6)( 7\; 8),
\]
which is a reflection in a plane perpendicular to the edges~$e$ and~$e^\alpha$ and passing through their midpoints, the third of the possibilities claimed.
\end{proof}

So far we've noted that an isometry of the cube with an invariant spanning tree must have an invariant edge.  We then drew back and considered all the cube isometries to see which ones leave and edge invariant, regardless of whether or not they have an invariant spanning tree.  We now take the isometry types we found with an invariant edge and check which, if any, of these types actually has an invariant spanning tree.

Proposition~\ref{t:3sym} provides three potential isometry types, and our first step is to eliminate one by showing that a reflection $\varphi$ in a plane determined by a pair of opposite edges of~$\cube$ (type~Ref 1 of Figure~\ref{f:curef}) has no invariant spanning tree.  To do this, suppose~$S$ is a connected spanning subgraph of~$\cube$ that is invariant under~$\varphi$.  We shall show that~$S$ must contain a circuit and so cannot be a tree. 

The subgraph~$S$ has four vertices fixed by~$\varphi$ and two pairs of vertices interchanged by~$\varphi$.  We can choose a vertex~$A$ from one fixed pair and a vertex~$B$ from the other fixed pair so that~$A$ and~$B$ are not adjacent in~$S$. Since~$S$ is connected, there is a path in~$S$ from~$A$ to~$B$, and since~$A$ and~$B$ are not adjacent, this path must pass through at least one vertex~$C$ that is moved by~$\varphi$. Since~$S$,~$A$, and~$B$ are all fixed by~$\varphi$, there is also a path in~$S$ from~$A$ to~$B$ through~$C^{\,\varphi} \ne C$. The union of these two distinct paths in~$S$ from~$A$ to~$B$ must contain a circuit in~$S$, and so~$S$ cannot be a tree. It follows that no spanning tree of~$\cube$ can be fixed by~$\varphi$.

We still have to show that the remaining rotation (Rot~2) and reflection (Ref 2) isometries  mentioned in Proposition~\ref{t:3sym} actually have some invariant spanning trees. The terminal entries of Figure~\ref{f:trees1} give examples of the spanning trees invariant under each type of isometry.  The top four entries are spanning trees invariant under one particular reflection, and the bottom eight entries are spanning trees invariant under one particular rotation. With these examples in hand, we have established the following result:

\begin{prop}\label{t:2sym}
There are two types of isometries of~$\cube$ with an invariant
spanning tree: a~$180^{\circ}$ rotation around an axis joining the
midpoints of a pair of opposite edges of~$\cube$ (type~\emph{Rot~2} in Figure~\ref{f:curots}) and a reflection in a plane through the midpoints of a set of parallel edges of~$\cube$ (type~\emph{Ref~2} of Figure~\ref{f:curef}).
\end{prop}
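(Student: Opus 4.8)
The plan is to establish the proposition as a two-sided statement: that \emph{only} isometries of types Rot~2 and Ref~2 can fix a spanning tree, and that both of these types genuinely do. For the ``only'' direction I would simply chain the two preparatory results already proved. Proposition~\ref{t:fe} guarantees that any $g \in G$ fixing a spanning tree leaves at least one edge invariant, while Proposition~\ref{t:3sym} lists all isometries with an invariant edge---exactly the three types Rot~2, Ref~1, and Ref~2. Thus the candidates are narrowed to these three, and everything reduces to deciding the three cases. Two of them, Rot~2 and Ref~2, are the claimed types, so the remaining task in this direction is to rule out Ref~1.

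The elimination of Ref~1 is the real substance. Here I would argue that a reflection $\varphi$ of type Ref~1 cannot fix any connected spanning subgraph $S$ without forcing a circuit. The operative features of $\varphi$ are that it fixes four of the eight vertices and interchanges the remaining two pairs. I would select fixed vertices $A$ and $B$ from different fixed pairs and not adjacent in $S$; connectivity then produces an $A$--$B$ path in $S$, which by non-adjacency must pass through some vertex $C$ moved by $\varphi$. Applying $\varphi$ to this path yields a second $A$--$B$ path through $C^{\varphi} \neq C$, and the two distinct paths together contain a circuit, contradicting the tree property. Since every $\varphi$-invariant connected spanning subgraph must contain a circuit, Ref~1 carries no invariant spanning tree and is discarded. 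I expect this step to be the main obstacle, as it requires recognizing that such $A$, $B$, $C$ can always be chosen and that the image path is genuinely distinct.

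For the converse---that Rot~2 and Ref~2 really do occur---I would exhibit one invariant spanning tree for a single representative of each type. This suffices for the whole type, because the isometries of one type form a conjugacy class in $G$ and conjugation sends invariant spanning trees to invariant spanning trees. Concretely, for a chosen representative I would pick seven edges permuted among themselves by the isometry, so that invariance is automatic, and then verify that these seven edges touch all eight vertices without forming a cycle, making them a spanning tree. The examples assembled in Figure~\ref{f:trees1} do precisely this, so it is enough to point to them and confirm the spanning and acyclic conditions by inspection; this verification is routine once the right symmetric edge sets are written down.
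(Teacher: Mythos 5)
Your proposal is correct and follows essentially the same path as the paper: chain Proposition~\ref{t:fe} with Proposition~\ref{t:3sym} to narrow the candidates to Rot~2, Ref~1, and Ref~2; eliminate Ref~1 by the identical path-and-circuit argument (two fixed vertices $A$, $B$, a moved vertex $C$ on a connecting path, and the $\varphi$-image path forcing a circuit); and confirm Rot~2 and Ref~2 by the explicit invariant spanning trees of Figure~\ref{f:trees1}. Your added remark that conjugacy transports invariant spanning trees across a whole isometry type is a small bonus of rigor the paper only mentions in passing later, but it does not change the substance of the argument.
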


The two types of isometries of~$\cube$ identified in Proposition~\ref{t:2sym} each interchange the endpoints of  pairs of opposite edges of the cube.  But this is too much of a good thing for spanning tree invariance, and the next proposition verifies that no spanning tree of the cube can contain both these edges.

\begin{prop} \label{p:1edge}
A non-identity automorphism of a tree can reverse at most one edge.
\end{prop}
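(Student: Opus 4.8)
The plan is to argue by contradiction, leaning on the single defining feature of a tree: between any two vertices there is a \emph{unique} path. Recall that $g$ \emph{reverses} an edge $e=\{u,v\}$ when $u^g=v$ and $v^g=u$, so that $e$ is invariant as a set while its endpoints are interchanged. Suppose a non-identity automorphism $g$ reverses two distinct edges $e_1=\{u_1,v_1\}$ and $e_2=\{u_2,v_2\}$. First I would observe that these edges must be vertex-disjoint: if they shared an endpoint $w$, then reversing $e_1$ would force $w^g$ to be the other endpoint of $e_1$, while reversing $e_2$ would force $w^g$ to be the other endpoint of $e_2$; as $g$ is a function these endpoints coincide, making $e_1=e_2$. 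So all four endpoints are distinct.

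Next I would connect the two edges through the tree. Two vertex-disjoint edges of a tree have a unique pair of facing endpoints joined by a path whose interior meets neither edge, and I would fix the labeling so that this connecting path $P$ runs from $v_1$ to $u_2$ and avoids both $u_1$ and $v_2$. (Concretely, $v_1$ is the endpoint of $e_1$ lying in the same component as $e_2$ when $e_1$ is deleted, and symmetrically for $u_2$.) With this normalization, the unique path from $u_1$ to $v_2$ is obtained by prepending the edge $e_1$ (from $u_1$ to $v_1$) and appending the edge $e_2$ (from $u_2$ to $v_2$) to $P$; it is therefore longer than $P$ by exactly two edges.

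The crux is then a single length comparison. Applying $g$ to $P$ yields a walk of the same length running from $v_1^g=u_1$ to $u_2^g=v_2$, and because $g$ is a bijection this walk is again a path, hence \emph{the} unique $u_1$-to-$v_2$ path. But that path was just seen to be two edges longer than $P$, whereas the image of $P$ has the same length as $P$ --- a contradiction. Thus no non-identity automorphism can reverse two distinct edges.

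The only delicate point I anticipate is the combinatorial bookkeeping in the second step: correctly orienting the two edges and confirming that the connecting path genuinely avoids the outer endpoints $u_1$ and $v_2$, rather than the final comparison, which is immediate once the picture is fixed. A slicker but less elementary alternative realizes the tree as a metric space: the midpoints of $e_1$ and $e_2$ are each fixed by $g$, which forces $g$ to fix the entire arc between them pointwise; since that arc leaves $e_1$ through one of its endpoints, that endpoint is fixed, contradicting the interchange of the endpoints of $e_1$. I would keep the combinatorial version in the main text to match the elementary spirit of the paper.
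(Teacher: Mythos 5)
Your proof is correct, but it takes a genuinely different route from the paper's. The paper's argument is a one-paragraph component swap: if $g$ reverses the edge $\set{A,A'}$, delete that edge to get two tree components $C_A$ and $C_{A'}$; since $A^g=A'$, the automorphism carries $C_A$ onto $C_{A'}$, so every other edge is moved from one component to the other and hence is not even setwise invariant, let alone reversed. This handles adjacent and disjoint edge pairs uniformly, with no labeling conventions needed, and --- importantly for the paper --- the deleted-edge/swapped-components picture is exactly the observation reused later to ``grow'' the invariant spanning trees in the enumeration of $\fix(\rho_0)$ and $\fix(\varphi_0)$, so the proof doubles as setup for the counting section. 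Your argument instead rests on uniqueness of paths in a tree plus length preservation under automorphisms: after ruling out a shared endpoint and normalizing which endpoints face each other, the image of the connecting path $P$ would have to be the unique $u_1$-to-$v_2$ path, which is two edges longer --- contradiction. The bookkeeping you flag (that $P$ avoids $u_1$ and $v_2$) does go through, precisely because $P$ stays in the component of $v_1$ when $e_1$ is deleted, so the proof is sound; it is just longer and needs the preliminary disjointness step that the paper's version gets for free. Your metric-realization aside (midpoints of both reversed edges fixed, hence the arc between them fixed pointwise) is also a valid and attractive variant, connecting to the classical fact that a tree automorphism fixes a vertex or the midpoint of a single edge.
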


\begin{proof}
Let~$\T$ be a tree and suppose it has an automorphism~$g \in \aut(\T)$ that reverses the edge~$\set{A,A'}$.  If this edge is deleted from~$\T$, a graph~$\T^{\,-}$ consisting of two tree components~$C_A$ and~$C_{A'}$ results, and~$g \in \aut(\T^{\,-})$ as well. (The component~$C_A$ is the one containing~$A$;~$C_{A'}$ contains~$A'$.)  Since~$A^g=A'$, it follows that~$(C_A)^g=C_{A'}$. Any other edge~$\set{B,B'}$ must lie in either~$C_A$ or~$C_{A'}$, and so~$g$ must move~$\set{B,B'}$ from one component of~$\T^{\,-}$ to the other. But this means~$g$ cannot reverse~$\set{B,B'}$, which establishes the claim.
\end{proof}

\subsection*{Counting orbits in $\Upsigma$} \label{s:numorbs}

So here is where things stand: \emph{an isometry of the cube with an invariant spanning tree is either a~$180^\circ$ rotation around an axis joining the midpoints of a pair of opposite edges (\emph{Rot~2} in Figure~\ref{f:curots}) or a reflection in a plane through the midpoints of a set of parallel edges (\emph{Ref~2} in Figure~\ref{f:curef}), and the invariant spanning tree in question will have a single invariant edge that is reversed by the isometry.}  These observations make it feasible to calculate, by hand, the sum~\eqref{e:bl}
provided by Burnside's lemma.  

Let~$\mathscr{R}$ denote the set of all~$180^{\circ}$ rotations around an axis through the midpoints of a pair of opposite edges of~$\cube$, and let~$\mathscr{F}$ denote the set of all reflections in a plane through the midpoints of a set of parallel edges of~$\cube$.  Clearly, the elements in~$\mathscr{R}$  all have the same number of fixed points, as do the elements in~$\mathscr{F}$. (Both~$\mathscr{R}$ and~$\mathscr{F}$ are conjugacy classes in~$G$, but we don't have to use this fact here.) Hence, Burnside's lemma becomes
\begin{align}
 \text{\# orbits in $\Upsigma$} &=
   \frac{1}{48} \left( 384 + \sum_{\rho \in \mathscr{R}} \fix(\rho)
   + \sum_{\varphi \in \mathscr{F}} \fix(\varphi) \right),\\
   &= \frac{1}{48} \bigl( 384 + \abs{\mathscr{R}}\fix(\rho_0) + \abs{\mathscr{F}}\fix(\varphi_0) \bigr),\\
   &= \frac{1}{48} \bigl( 384 + 6 \fix(\rho_0) + 3\fix(\varphi_0) \bigr),\\
   &= 8 + \frac{\fix(\rho_0)}{8} + \frac{\fix(\varphi_0)}{16}, \label{e:blfinal}
\end{align}
where~$\rho_0 \in \mathscr{R}$ and~$\varphi_0 \in \mathscr{F}$ are particular single elements. The values~$\abs{\mathscr{R}}=6$ and~$\abs{\mathscr{F}}=3$ come from the fact that the elements of~$\mathscr{R}$, respectively~$\mathscr{F}$, are in 1-1 correspondence with an axis of rotation, respectively a plane of reflection.  To count the rotation axes for~$\mathscr{R}$, use the fact that each axis passes through the midpoints of a pair of opposite edges.  The cube has twelve edges and so six pairs of opposite edges, giving~$\abs{\mathscr{R}}=12/2=6$.  To count reflection planes for~$\mathscr{F}$, use the fact that each plane passes through the midpoints of four parallel edges, and there are three such groups of parallel edges, yielding~$\abs{\mathscr{F}}=12/4=3$. 

It remains to count the number of invariant spanning trees for the single elements~$\rho_0$ and~$\varphi_0$. Each of these isometries leaves invariant (by reversal) pairs of opposite edges of~$\cube$.  The rotation~$\rho_0$ leaves invariant a pair of opposite edges (the one whose midpoints are contained in the axis of rotation), and the reflection~$\varphi_0$ leaves invariant two pairs of opposite edges (the ones whose midpoints are contained in the plane of reflection).  We have established that each invariant spanning tree contains exactly one invariant edge.  So, for the rotation~$\rho_0$, we can choose one of its two possible invariant edges, find the number of invariant spanning trees that have the chosen edge invariant, and then double that count to get the number of spanning trees invariant  under~$\rho_0$.  For the reflection~$\varphi_0$, we can choose one of its four possible invariant edges, find the number of invariant spanning trees that have the chosen edge invariant, and then quadruple that count to get the number of spanning trees invariant under~$\varphi_0$.

Our strategy for enumerating spanning trees with a specified invariant edge relies on the observation in the proof of Proposition~\ref{p:1edge} that if a reversed edge is deleted, the result is a pair of isomorphic tree components, either one of which is the image of the other under the isometry that reverses the deleted edge.  This means that we can ``grow'' all possible invariant trees, starting at an endpoint of an edge that will ultimately be the reversed edge. Each time we add an edge to the growing tree, we employ the relevant isometry, either~$\rho_0$ or~$\varphi_0$, to produce a corresponding edge  and so maintain the required symmetry.  We use an orbit diagram to implement the tracing process simultaneously with the growing process.  At each stage of the growing process, an open circle denotes the endpoint or endpoints from which the next edge or pair of edges will ``sprout.''   The orbit labels indicate the edges that have to be added to the growing tree to ``balance'' the newly-sprouted edges. See Figure~\ref{f:trees1} below.

\begin{figure}[H]
\[\begin{xy}
 (-35, 136)*{\text{\small Edge orbits for $\varphi_0$}};
 (-32,152)*{\begin{xy}
          (8,24)*{}="T3"; (24,24)*{}="T2";
        (0,16)*{}="T0";  (16,16)*{}="T1";
          (8,8)*{}="B3"; (24,8)*{}="B2";
         (0,0)*{}="B0";  (16,0)*{}="B1";
        {\ar @{-} "T0";"T1"}; {\ar @{-}|4 "T1";"T2"}; {\ar @{-} "T2";"T3"}; {\ar @{-}|4 "T3";"T0"}; 
        {\ar @{-}|>>>>>{\strut 1} "B0";"T0"}; {\ar @{-}|>>>>>{\strut 1} "B1";"T1"}; {\ar @{-}|<<<<<{\strut 3} "B2";"T2"}; {\ar @{..}|<<<<<{\strut 3} "B3";"T3"}; 
        {\ar @{-} "B0";"B1"};  {\ar @{-}|2  "B1";"B2"}; {\ar @{..} "B2";"B3"}; {\ar @{..}|{\strut 2} "B3";"B0"}; 
        \end{xy}};
 (0,152)*+{\begin{xy}
          (4,12)*{}="T3"; (12,12)*{}="T2";
        (0,8)*{}="T0";  (8,8)*{}="T1";
          (4,4)*{}="B3"; (12,4)*{}="B2";
         (0,0)*{}="B0";  (8,0)*{\circ}="B1";
        {\ar @{..} "T0";"T1"}; {\ar @{..} "T1";"T2"}; {\ar @{..} "T2";"T3"}; {\ar @{..} "T3";"T0"}; 
        {\ar @{..} "B0";"T0"}; {\ar @{..} "B1";"T1"}; {\ar @{..} "B2";"T2"}; {\ar @{..} "B3";"T3"}; 
        {\ar @{-} "B0";"B1"};  {\ar @{..} "B1";"B2"}; {\ar @{..} "B2";"B3"}; {\ar @{..} "B3";"B0"}; 
        \end{xy}}="L00";
 (24,136)*+{\begin{xy}
          (4,12)*{}="T3"; (12,12)*{}="T2";
        (0,8)*{}="T0";  (8,8)*{\circ}="T1";
          (4,4)*{}="B3"; (12,4)*{\circ}="B2";
         (0,0)*{}="B0";  (8,0)*{}="B1";
        {\ar @{..} "T0";"T1"}; {\ar @{..} "T1";"T2"}; {\ar @{..} "T2";"T3"}; {\ar @{..} "T3";"T0"}; 
        {\ar @{-}  "B0";"T0"}; {\ar @{-}  "B1";"T1"}; {\ar @{..} "B2";"T2"}; {\ar @{..} "B3";"T3"}; 
        {\ar @{-}  "B0";"B1"}; {\ar @{-}  "B1";"B2"}; {\ar @{..} "B2";"B3"}; {\ar @{-}  "B3";"B0"}; 
        \end{xy}}="L10";
 (48,128)*+{\begin{xy}
          (4,12)*{}="T3"; (12,12)*{}="T2";
        (0,8)*{}="T0";  (8,8)*{}="T1";
          (4,4)*{}="B3"; (12,4)*{}="B2";
         (0,0)*{}="B0";  (8,0)*{}="B1";
        {\ar @{..} "T0";"T1"}; {\ar @{-} "T1";"T2"}; {\ar @{..} "T2";"T3"}; {\ar @{-}  "T3";"T0"}; 
        {\ar @{-}  "B0";"T0"}; {\ar @{-} "B1";"T1"}; {\ar @{..} "B2";"T2"}; {\ar @{..} "B3";"T3"}; 
        {\ar @{-}  "B0";"B1"}; {\ar @{-} "B1";"B2"}; {\ar @{..} "B2";"B3"}; {\ar @{-}  "B3";"B0"}; 
        \end{xy}}="L20";
 (48,144)*+{\begin{xy}
          (4,12)*{}="T3"; (12,12)*{}="T2";
        (0,8)*{}="T0";  (8,8)*{}="T1";
          (4,4)*{}="B3"; (12,4)*{}="B2";
         (0,0)*{}="B0";  (8,0)*{}="B1";
        {\ar @{..} "T0";"T1"};  {\ar @{..} "T1";"T2"}; {\ar @{..} "T2";"T3"}; {\ar @{..} "T3";"T0"}; 
        {\ar @{-}  "B0";"T0"};  {\ar @{-}  "B1";"T1"}; {\ar @{-}  "B2";"T2"}; {\ar @{-}  "B3";"T3"}; 
        {\ar @{-}  "B0";"B1"};  {\ar @{-}  "B1";"B2"}; {\ar @{..} "B2";"B3"}; {\ar @{-}  "B3";"B0"}; 
        \end{xy}}="L21";
 (24,160)*+{\begin{xy}
          (4,12)*{}="T3"; (12,12)*{}="T2";
        (0,8)*{}="T0";  (8,8)*{}="T1";
          (4,4)*{}="B3"; (12,4)*{\circ}="B2";
         (0,0)*{}="B0";  (8,0)*{}="B1";
        {\ar @{..} "T0";"T1"}; {\ar @{..} "T1";"T2"}; {\ar @{..} "T2";"T3"}; {\ar @{..} "T3";"T0"}; 
        {\ar @{..} "B0";"T0"}; {\ar @{..} "B1";"T1"}; {\ar @{..} "B2";"T2"}; {\ar @{..} "B3";"T3"}; 
        {\ar @{-}  "B0";"B1"}; {\ar @{-}  "B1";"B2"}; {\ar @{..} "B2";"B3"}; {\ar @{-}  "B3";"B0"}; 
        \end{xy}}="L11";
 (48,160)*+{\begin{xy}
          (4,12)*{}="T3"; (12,12)*{\circ}="T2";
        (0,8)*{}="T0";  (8,8)*{}="T1";
          (4,4)*{}="B3"; (12,4)*{}="B2";
         (0,0)*{}="B0";  (8,0)*{}="B1";
        {\ar @{..} "T0";"T1"}; {\ar @{..} "T1";"T2"}; {\ar @{..} "T2";"T3"}; {\ar @{..} "T3";"T0"}; 
        {\ar @{..} "B0";"T0"}; {\ar @{..} "B1";"T1"}; {\ar @{-}  "B2";"T2"}; {\ar @{-}  "B3";"T3"}; 
        {\ar @{-}  "B0";"B1"}; {\ar @{-}  "B1";"B2"}; {\ar @{..} "B2";"B3"}; {\ar @{-}  "B3";"B0"}; 
        \end{xy}}="L22";
 (72,160)*+{\begin{xy}
          (4,12)*{}="T3"; (12,12)*{}="T2";
        (0,8)*{}="T0";  (8,8)*{}="T1";
          (4,4)*{}="B3"; (12,4)*{}="B2";
         (0,0)*{}="B0";  (8,0)*{}="B1";
        {\ar @{..} "T0";"T1"}; {\ar @{-}  "T1";"T2"}; {\ar @{..} "T2";"T3"}; {\ar @{-} "T3";"T0"}; 
        {\ar @{..} "B0";"T0"}; {\ar @{..} "B1";"T1"}; {\ar @{-}  "B2";"T2"}; {\ar @{-} "B3";"T3"}; 
        {\ar @{-}  "B0";"B1"}; {\ar @{-}  "B1";"B2"}; {\ar @{..} "B2";"B3"}; {\ar @{-} "B3";"B0"}; 
        \end{xy}}="L30";
 (24,176)*+{\begin{xy}
          (4,12)*{}="T3"; (12,12)*{}="T2";
        (0,8)*{}="T0";  (8,8)*{\circ}="T1";
          (4,4)*{}="B3"; (12,4)*{}="B2";
         (0,0)*{}="B0";  (8,0)*{}="B1";
        {\ar @{..} "T0";"T1"}; {\ar @{..} "T1";"T2"}; {\ar @{..} "T2";"T3"}; {\ar @{..} "T3";"T0"}; 
        {\ar @{-}  "B0";"T0"}; {\ar @{-}  "B1";"T1"}; {\ar @{..} "B2";"T2"}; {\ar @{..} "B3";"T3"}; 
        {\ar @{-}  "B0";"B1"}; {\ar @{..} "B1";"B2"}; {\ar @{..} "B2";"B3"}; {\ar @{..} "B3";"B0"}; 
        \end{xy}}="L12";
 (48,176)*+{\begin{xy}
          (4,12)*{}="T3"; (12,12)*{\circ}="T2";
        (0,8)*{}="T0";  (8,8)*{}="T1";
          (4,4)*{}="B3"; (12,4)*{}="B2";
         (0,0)*{}="B0";  (8,0)*{}="B1";
        {\ar @{..} "T0";"T1"}; {\ar @{-}  "T1";"T2"}; {\ar @{..} "T2";"T3"}; {\ar @{-}  "T3";"T0"}; 
        {\ar @{-}  "B0";"T0"}; {\ar @{-}  "B1";"T1"}; {\ar @{..} "B2";"T2"}; {\ar @{..} "B3";"T3"}; 
        {\ar @{-}  "B0";"B1"}; {\ar @{..} "B1";"B2"}; {\ar @{..} "B2";"B3"}; {\ar @{..} "B3";"B0"}; 
        \end{xy}}="L23";
 (72,176)*+{\begin{xy}
          (4,12)*{}="T3"; (12,12)*{}="T2";
        (0,8)*{}="T0";  (8,8)*{}="T1";
          (4,4)*{}="B3"; (12,4)*{}="B2";
         (0,0)*{}="B0";  (8,0)*{}="B1";
        {\ar @{..} "T0";"T1"}; {\ar @{-}  "T1";"T2"}; {\ar @{..} "T2";"T3"}; {\ar @{-}  "T3";"T0"}; 
        {\ar @{-}  "B0";"T0"}; {\ar @{-}  "B1";"T1"}; {\ar @{-}  "B2";"T2"}; {\ar @{-}  "B3";"T3"}; 
        {\ar @{-}  "B0";"B1"}; {\ar @{..} "B1";"B2"}; {\ar @{..} "B2";"B3"}; {\ar @{..} "B3";"B0"}; 
        \end{xy}}="L31";
 {\ar @{-} "L00";"L10"}; {\ar @{-} "L00";"L11"}; {\ar @{-} "L00";"L12"};
 {\ar @{-} "L10";"L20"}; {\ar @{-} "L10";"L21"};
 {\ar @{-} "L11";"L22"};
 {\ar @{-} "L12";"L23"};
 {\ar @{-} "L22";"L30"};
 {\ar @{-} "L23";"L31"};
(-35,32)*{\text{\small Edge orbits for $\rho_0$}};
 (-32,48)*{\begin{xy}
          (8,24)*{}="T3"; (24,24)*{}="T2";
        (0,16)*{}="T0";  (16,16)*{}="T1";
          (8,8)*{}="B3"; (24,8)*{}="B2";
         (0,0)*{}="B0";  (16,0)*{}="B1";
        {\ar @{-}|>>>>>{\,5\,} "T0";"T1"}; {\ar @{-}|4 "T1";"T2"}; {\ar @{-} "T2";"T3"}; {\ar @{-}|3 "T3";"T0"}; 
        {\ar @{-}|>>>>>{\strut 2} "B0";"T0"}; {\ar @{-}|>>>>>{\strut 1} "B1";"T1"}; {\ar @{-}|<<<<<{\strut 3} "B2";"T2"}; {\ar @{..}|<<<<<{\strut 4} "B3";"T3"}; 
        {\ar @{-} "B0";"B1"};  {\ar @{-}|2  "B1";"B2"}; {\ar @{..}|>>>>>{\,5\,} "B2";"B3"}; {\ar @{..}|{\strut 1} "B3";"B0"}; 
        \end{xy}};
 (0,48)*+{\begin{xy}
          (4,12)*{}="T3"; (12,12)*{}="T2";
        (0,8)*{}="T0";  (8,8)*{}="T1";
          (4,4)*{}="B3"; (12,4)*{}="B2";
         (0,0)*{}="B0";  (8,0)*{\circ}="B1";
        {\ar @{..} "T0";"T1"}; {\ar @{..} "T1";"T2"}; {\ar @{..} "T2";"T3"}; {\ar @{..} "T3";"T0"}; 
        {\ar @{..} "B0";"T0"}; {\ar @{..} "B1";"T1"}; {\ar @{..} "B2";"T2"}; {\ar @{..} "B3";"T3"}; 
        {\ar @{-} "B0";"B1"};  {\ar @{..} "B1";"B2"}; {\ar @{..} "B2";"B3"}; {\ar @{..} "B3";"B0"}; 
        \end{xy}}="L00";
 (24,96)*+{\begin{xy}
          (4,12)*{}="T3"; (12,12)*{}="T2";
        (0,8)*{}="T0";  (8,8)*{\circ}="T1";
          (4,4)*{}="B3"; (12,4)*{}="B2";
         (0,0)*{}="B0";  (8,0)*{}="B1";
        {\ar @{..} "T0";"T1"}; {\ar @{..} "T1";"T2"}; {\ar @{..} "T2";"T3"}; {\ar @{..} "T3";"T0"}; 
        {\ar @{..} "B0";"T0"}; {\ar @{-} "B1";"T1"}; {\ar @{..} "B2";"T2"}; {\ar @{..} "B3";"T3"}; 
        {\ar @{-} "B0";"B1"};  {\ar @{..} "B1";"B2"}; {\ar @{..} "B2";"B3"}; {\ar @{-} "B3";"B0"}; 
        \end{xy}}="L12";
 (24,48)*+{\begin{xy}
          (4,12)*{}="T3"; (12,12)*{}="T2";
        (0,8)*{}="T0";  (8,8)*{}="T1";
          (4,4)*{}="B3"; (12,4)*{\circ}="B2";
         (0,0)*{}="B0";  (8,0)*{}="B1";
        {\ar @{..} "T0";"T1"}; {\ar @{..} "T1";"T2"}; {\ar @{..} "T2";"T3"}; {\ar @{..} "T3";"T0"}; 
        {\ar @{-} "B0";"T0"}; {\ar @{..} "B1";"T1"}; {\ar @{..} "B2";"T2"}; {\ar @{..} "B3";"T3"}; 
        {\ar @{-} "B0";"B1"};  {\ar @{-} "B1";"B2"}; {\ar @{..} "B2";"B3"}; {\ar @{..} "B3";"B0"}; 
        \end{xy}}="L11";
 (24,8)*+{\begin{xy}
          (4,12)*{}="T3"; (12,12)*{}="T2";
        (0,8)*{}="T0";  (8,8)*{\circ}="T1";
          (4,4)*{}="B3"; (12,4)*{\circ}="B2";
         (0,0)*{}="B0";  (8,0)*{}="B1";
        {\ar @{..} "T0";"T1"}; {\ar @{..} "T1";"T2"}; {\ar @{..} "T2";"T3"}; {\ar @{..} "T3";"T0"}; 
        {\ar @{-} "B0";"T0"}; {\ar @{-} "B1";"T1"}; {\ar @{..} "B2";"T2"}; {\ar @{..} "B3";"T3"}; 
        {\ar @{-} "B0";"B1"};  {\ar @{-} "B1";"B2"}; {\ar @{..} "B2";"B3"}; {\ar @{-} "B3";"B0"}; 
        \end{xy}}="L10";
 (48,112)*+{\begin{xy}
          (4,12)*{}="T3"; (12,12)*{\circ}="T2";
        (0,8)*{}="T0";  (8,8)*{}="T1";
          (4,4)*{}="B3"; (12,4)*{}="B2";
         (0,0)*{}="B0";  (8,0)*{}="B1";
        {\ar @{..} "T0";"T1"}; {\ar @{-} "T1";"T2"}; {\ar @{..} "T2";"T3"}; {\ar @{..} "T3";"T0"}; 
        {\ar @{..} "B0";"T0"}; {\ar @{-} "B1";"T1"}; {\ar @{..} "B2";"T2"}; {\ar @{-} "B3";"T3"}; 
        {\ar @{-} "B0";"B1"};  {\ar @{..} "B1";"B2"}; {\ar @{..} "B2";"B3"}; {\ar @{-} "B3";"B0"}; 
        \end{xy}}="L27";
 (48,96)*+{\begin{xy}
          (4,12)*{}="T3"; (12,12)*{}="T2";
        (0,8)*{\circ}="T0";  (8,8)*{}="T1";
          (4,4)*{}="B3"; (12,4)*{}="B2";
         (0,0)*{}="B0";  (8,0)*{}="B1";
        {\ar @{-} "T0";"T1"}; {\ar @{..} "T1";"T2"}; {\ar @{..} "T2";"T3"}; {\ar @{..} "T3";"T0"}; 
        {\ar @{..} "B0";"T0"}; {\ar @{-} "B1";"T1"}; {\ar @{..} "B2";"T2"}; {\ar @{..} "B3";"T3"}; 
        {\ar @{-} "B0";"B1"};  {\ar @{..} "B1";"B2"}; {\ar @{-} "B2";"B3"}; {\ar @{-} "B3";"B0"}; 
        \end{xy}}="L26";
 (48,80)*+{\begin{xy}
          (4,12)*{}="T3"; (12,12)*{}="T2";
        (0,8)*{}="T0";  (8,8)*{}="T1";
          (4,4)*{}="B3"; (12,4)*{}="B2";
         (0,0)*{}="B0";  (8,0)*{}="B1";
        {\ar @{-} "T0";"T1"}; {\ar @{-} "T1";"T2"}; {\ar @{..} "T2";"T3"}; {\ar @{..} "T3";"T0"}; 
        {\ar @{..} "B0";"T0"}; {\ar @{-} "B1";"T1"}; {\ar @{..} "B2";"T2"}; {\ar @{-} "B3";"T3"}; 
        {\ar @{-} "B0";"B1"};  {\ar @{..} "B1";"B2"}; {\ar @{-} "B2";"B3"}; {\ar @{-} "B3";"B0"}; 
        \end{xy}}="L25";
 (48,64)*+{\begin{xy}
          (4,12)*{}="T3"; (12,12)*{\circ}="T2";
        (0,8)*{}="T0";  (8,8)*{}="T1";
          (4,4)*{}="B3"; (12,4)*{}="B2";
         (0,0)*{}="B0";  (8,0)*{}="B1";
        {\ar @{..} "T0";"T1"}; {\ar @{..} "T1";"T2"}; {\ar @{..} "T2";"T3"}; {\ar @{-} "T3";"T0"}; 
        {\ar @{-} "B0";"T0"}; {\ar @{..} "B1";"T1"}; {\ar @{-} "B2";"T2"}; {\ar @{..} "B3";"T3"}; 
        {\ar @{-} "B0";"B1"};  {\ar @{-} "B1";"B2"}; {\ar @{..} "B2";"B3"}; {\ar @{..} "B3";"B0"}; 
        \end{xy}}="L24";
 (48,48)*+{\begin{xy}
          (4,12)*{}="T3"; (12,12)*{}="T2";
        (0,8)*{}="T0";  (8,8)*{}="T1";
          (4,4)*{\circ}="B3"; (12,4)*{}="B2";
         (0,0)*{}="B0";  (8,0)*{}="B1";
        {\ar @{-} "T0";"T1"}; {\ar @{..} "T1";"T2"}; {\ar @{..} "T2";"T3"}; {\ar @{..} "T3";"T0"}; 
        {\ar @{-} "B0";"T0"}; {\ar @{..} "B1";"T1"}; {\ar @{..} "B2";"T2"}; {\ar @{..} "B3";"T3"}; 
        {\ar @{-} "B0";"B1"};  {\ar @{-} "B1";"B2"}; {\ar @{-} "B2";"B3"}; {\ar @{..} "B3";"B0"}; 
        \end{xy}}="L23";
 (48,32)*+{\begin{xy}
          (4,12)*{}="T3"; (12,12)*{}="T2";
        (0,8)*{}="T0";  (8,8)*{}="T1";
          (4,4)*{}="B3"; (12,4)*{}="B2";
         (0,0)*{}="B0";  (8,0)*{}="B1";
        {\ar @{-} "T0";"T1"}; {\ar @{..} "T1";"T2"}; {\ar @{..} "T2";"T3"}; {\ar @{-} "T3";"T0"}; 
        {\ar @{-} "B0";"T0"}; {\ar @{..} "B1";"T1"}; {\ar @{-} "B2";"T2"}; {\ar @{..} "B3";"T3"}; 
        {\ar @{-} "B0";"B1"};  {\ar @{-} "B1";"B2"}; {\ar @{-} "B2";"B3"}; {\ar @{..} "B3";"B0"}; 
        \end{xy}}="L22";
 (48,16)*+{\begin{xy}
          (4,12)*{}="T3"; (12,12)*{}="T2";
        (0,8)*{}="T0";  (8,8)*{}="T1";
          (4,4)*{}="B3"; (12,4)*{}="B2";
         (0,0)*{}="B0";  (8,0)*{}="B1";
        {\ar @{..} "T0";"T1"}; {\ar @{..} "T1";"T2"}; {\ar @{..} "T2";"T3"}; {\ar @{-} "T3";"T0"}; 
        {\ar @{-} "B0";"T0"}; {\ar @{-} "B1";"T1"}; {\ar @{-} "B2";"T2"}; {\ar @{..} "B3";"T3"}; 
        {\ar @{-} "B0";"B1"};  {\ar @{-} "B1";"B2"}; {\ar @{..} "B2";"B3"}; {\ar @{-} "B3";"B0"}; 
        \end{xy}}="L21";
 (48,0)*+{\begin{xy}
          (4,12)*{}="T3"; (12,12)*{}="T2";
        (0,8)*{}="T0";  (8,8)*{}="T1";
          (4,4)*{}="B3"; (12,4)*{}="B2";
         (0,0)*{}="B0";  (8,0)*{}="B1";
        {\ar @{..} "T0";"T1"}; {\ar @{-} "T1";"T2"}; {\ar @{..} "T2";"T3"}; {\ar @{..} "T3";"T0"}; 
        {\ar @{-} "B0";"T0"}; {\ar @{-} "B1";"T1"}; {\ar @{..} "B2";"T2"}; {\ar @{-} "B3";"T3"}; 
        {\ar @{-} "B0";"B1"};  {\ar @{-} "B1";"B2"}; {\ar @{..} "B2";"B3"}; {\ar @{-} "B3";"B0"}; 
        \end{xy}}="L20";
 (72,112)*+{\begin{xy}
          (4,12)*{}="T3"; (12,12)*{}="T2";
        (0,8)*{}="T0";  (8,8)*{}="T1";
          (4,4)*{}="B3"; (12,4)*{}="B2";
         (0,0)*{}="B0";  (8,0)*{}="B1";
        {\ar @{..} "T0";"T1"}; {\ar @{-} "T1";"T2"}; {\ar @{..} "T2";"T3"}; {\ar @{-} "T3";"T0"}; 
        {\ar @{..} "B0";"T0"}; {\ar @{-} "B1";"T1"}; {\ar @{-} "B2";"T2"}; {\ar @{-} "B3";"T3"}; 
        {\ar @{-} "B0";"B1"};  {\ar @{..} "B1";"B2"}; {\ar @{..} "B2";"B3"}; {\ar @{-} "B3";"B0"}; 
        \end{xy}}="L33";
 (72,96)*+{\begin{xy}
          (4,12)*{}="T3"; (12,12)*{}="T2";
        (0,8)*{}="T0";  (8,8)*{}="T1";
          (4,4)*{}="B3"; (12,4)*{}="B2";
         (0,0)*{}="B0";  (8,0)*{}="B1";
        {\ar @{-} "T0";"T1"}; {\ar @{..} "T1";"T2"}; {\ar @{..} "T2";"T3"}; {\ar @{-} "T3";"T0"}; 
        {\ar @{..} "B0";"T0"}; {\ar @{-} "B1";"T1"}; {\ar @{-} "B2";"T2"}; {\ar @{..} "B3";"T3"}; 
        {\ar @{-} "B0";"B1"};  {\ar @{..} "B1";"B2"}; {\ar @{-} "B2";"B3"}; {\ar @{-} "B3";"B0"}; 
        \end{xy}}="L32";
  (72,64)*+{\begin{xy}
          (4,12)*{}="T3"; (12,12)*{}="T2";
        (0,8)*{}="T0";  (8,8)*{}="T1";
          (4,4)*{}="B3"; (12,4)*{}="B2";
         (0,0)*{}="B0";  (8,0)*{}="B1";
        {\ar @{..} "T0";"T1"}; {\ar @{-} "T1";"T2"}; {\ar @{..} "T2";"T3"}; {\ar @{-} "T3";"T0"}; 
        {\ar @{-} "B0";"T0"}; {\ar @{..} "B1";"T1"}; {\ar @{-} "B2";"T2"}; {\ar @{-} "B3";"T3"}; 
        {\ar @{-} "B0";"B1"};  {\ar @{-} "B1";"B2"}; {\ar @{..} "B2";"B3"}; {\ar @{..} "B3";"B0"}; 
        \end{xy}}="L31";
 (72,48)*+{\begin{xy}
          (4,12)*{}="T3"; (12,12)*{}="T2";
        (0,8)*{}="T0";  (8,8)*{}="T1";
          (4,4)*{}="B3"; (12,4)*{}="B2";
         (0,0)*{}="B0";  (8,0)*{}="B1";
        {\ar @{-} "T0";"T1"}; {\ar @{-} "T1";"T2"}; {\ar @{..} "T2";"T3"}; {\ar @{..} "T3";"T0"}; 
        {\ar @{-} "B0";"T0"}; {\ar @{..} "B1";"T1"}; {\ar @{..} "B2";"T2"}; {\ar @{-} "B3";"T3"}; 
        {\ar @{-} "B0";"B1"};  {\ar @{-} "B1";"B2"}; {\ar @{-} "B2";"B3"}; {\ar @{..} "B3";"B0"}; 
        \end{xy}}="L30";
 {\ar @{-} "L00";"L10"}; {\ar @{-} "L00";"L11"}; {\ar @{-} "L00";"L12"};
 {\ar @{-} "L10";"L20"}; {\ar @{-} "L10";"L21"};
 {\ar @{-} "L11";"L22"}; {\ar @{-} "L11";"L23"}; {\ar @{-} "L11";"L24"};
 {\ar @{-} "L12";"L25"}; {\ar @{-} "L12";"L26"}; {\ar @{-} "L12";"L27"};
 {\ar @{-} "L23";"L30"}; {\ar @{-} "L24";"L31"}; {\ar @{-} "L26";"L32"}; {\ar @{-} "L27";"L33"};
\end{xy}\]
\caption{\label{f:trees1}Enumerating  spanning trees with a given fixed edge that are fixed by $\varphi_0$ and $\rho_0$ }  
\end{figure}
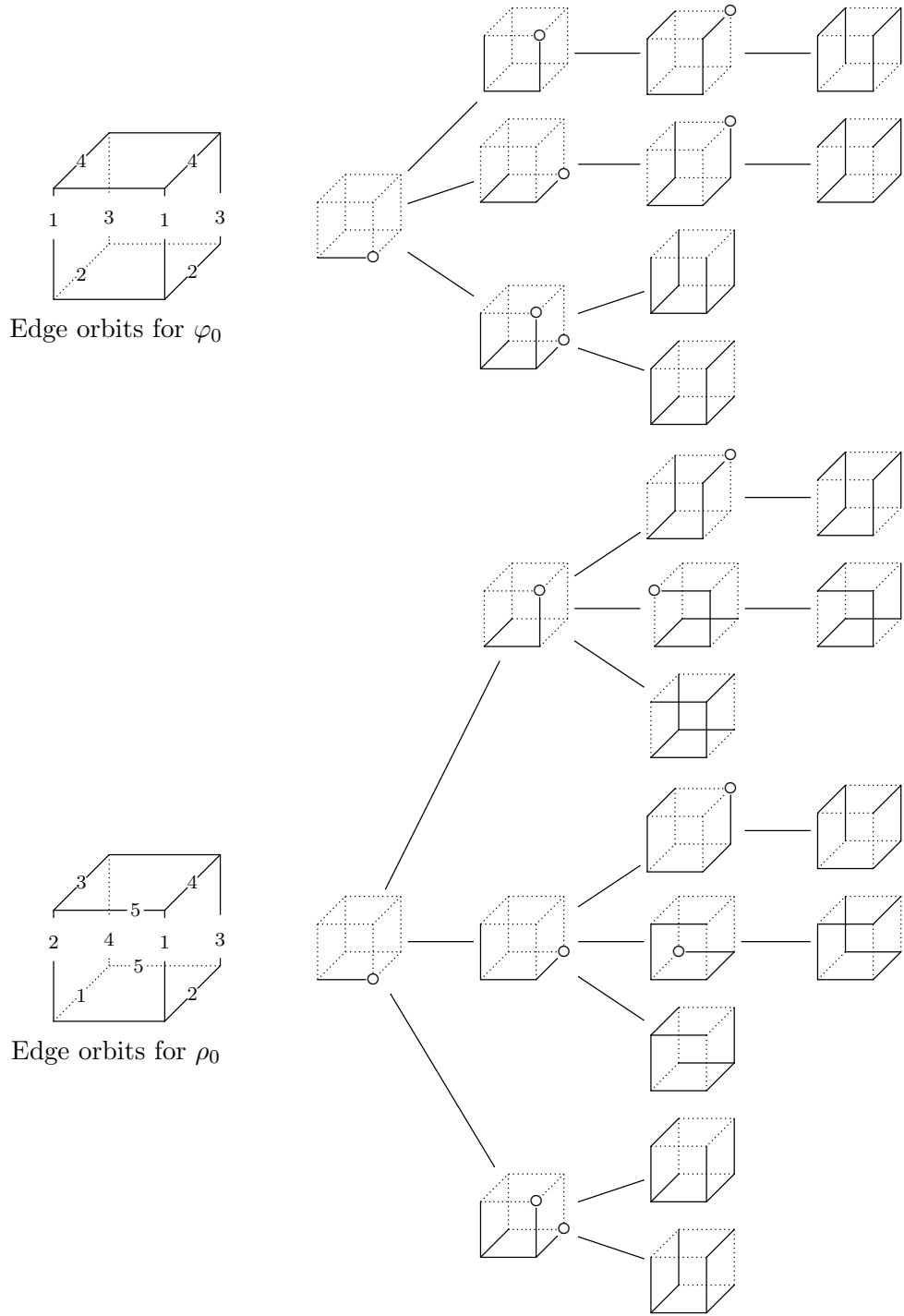

Recalling that the count for~$\rho_0$ must be doubled and the count
for~$\varphi_0$ must be quadrupled, we find that~$\fix(\rho_0)= 8
\times 2 =16$ and~$\fix(\varphi_0)= 4 \times 4 = 16$. Substituting
these values into Equation~\eqref{e:blfinal}, we get
\[
\text{\# orbits in~$\Upsigma$}=8 + \frac{16}{8} + \frac{16}{16}=11,
\]
thereby establishing

\begin{prop} \label{p:nunf}
The spanning trees of~$\cube$ are partitioned into eleven orbits under the action of~$\sym(\cube)$.
\end{prop}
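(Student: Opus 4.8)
The plan is to complete the Burnside computation, which the preceding development has already collapsed to Equation~\eqref{e:blfinal},
\[
\text{\# orbits in $\Upsigma$} = 8 + \frac{\fix(\rho_0)}{8} + \frac{\fix(\varphi_0)}{16}.
\]
Everything now rests on the two integers $\fix(\rho_0)$ and $\fix(\varphi_0)$, the numbers of spanning trees fixed by a single representative of type Rot~2 and a single representative of type Ref~2; by Proposition~\ref{t:2sym} these are the only isometries that can contribute, and by Proposition~\ref{p:1edge} every tree they fix is reversed on exactly one edge. So the task reduces to two concrete enumerations.

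First I would exploit the splitting noted in the proof of Proposition~\ref{p:1edge}: deleting the unique reversed edge of an invariant tree leaves two tree components that the isometry interchanges, so one half determines the other. This turns the count into a one-sided \emph{growing} problem. I would fix one of the isometry's invariant edges to play the role of the reversed edge, then build the tree outward from an endpoint of that edge, adding edges one at a time; each time I add an edge that the isometry moves, its image is forced into the tree as well, preserving symmetry automatically. Organizing these forced choices as the orbit/growth diagram of Figure~\ref{f:trees1}, each downward branch records a legal next edge and each maximal branch ends at an invariant spanning tree on all eight vertices.

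The hard part is purely the bookkeeping of the growth, not any further theory: I must guarantee that the process (i) never closes a circuit, (ii) halts with exactly seven edges so that all eight vertices are covered, and (iii) lists each invariant tree once. The orbit labels in the diagram encode exactly which partner edge each symmetry forces, so counting the leaves of the diagram gives the number of invariant trees through the chosen edge. Carrying this out produces eight such trees for $\rho_0$ and four for $\varphi_0$.

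Finally I would restore the choice of reversed edge. Because $\rho_0$ fixes two opposite edges while $\varphi_0$ fixes four, and because the per-edge counts are equal by symmetry while the corresponding tree sets are disjoint (each tree reverses only one edge), the leaf counts get multiplied by the number of invariant edges: $\fix(\rho_0) = 8 \times 2 = 16$ and $\fix(\varphi_0) = 4 \times 4 = 16$. Substituting into Equation~\eqref{e:blfinal} gives $8 + 16/8 + 16/16 = 11$, the asserted number of orbits.
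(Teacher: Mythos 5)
Your proposal is correct and follows essentially the same route as the paper's own proof: the same reduction to Equation~\eqref{e:blfinal}, the same growing/orbit-diagram enumeration of Figure~\ref{f:trees1} anchored at a chosen reversed edge, and the same multiplication by the number of invariant edges to get $\fix(\rho_0)=16$ and $\fix(\varphi_0)=16$, hence eleven orbits. Your explicit observation that the tree sets arising from different choices of reversed edge are disjoint (because each invariant tree reverses exactly one edge) is a point the paper leaves implicit, but otherwise the two arguments coincide.
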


The arguments given above do not exclude the possibility that there are fewer than eleven unfoldings of~$\cube$. It is conceivable that two spanning trees that are not equivalent under the action of~$\sym(\cube)$ nonetheless somehow give rise to unfoldings that are congruent in the plane. And it is also conceivable that spanning trees from distinct orbits are somehow isomorphic via a mapping that does not extend to a cube isometry.  So we now identify eleven incongruent shapes (Figure~\ref{11unfoldings}), thereby attaining the upper bound calculated in Proposition~\ref{p:nunf}, and so proving the following result:

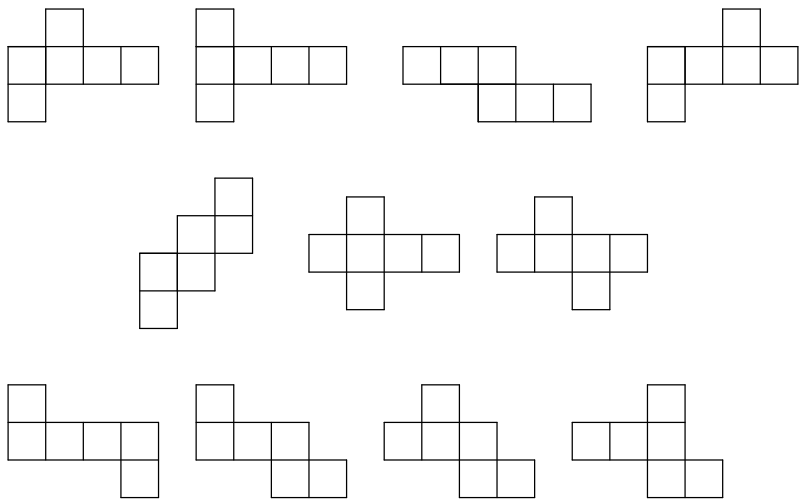
\begin{figure}[H]    
\[
 \begin{xy}
 (0,5)*{\begin{xy}
       (0,0)*{}="A"; (5,0)*{}="B"; (5,5)*{}="C";  (10,5)*{}="D"; (15,5)*{}="E"; (20,5)*{}="F";
       (20,10)*{}="G"; (15,10)*{}="H"; (10,10)*{}="I"; (10,15)*{}="J"; (5,15)*{}="K"; (5,10)*{}="L";
       (0,10)*{}="M"; (0,5)*{}="N";
       {\ar @{-} "A" ; "B"}; {\ar @{-} "B" ; "C"}; {\ar @{-} "N" ; "F"}; {\ar @{-} "F" ; "G"}; {\ar @{-} "G" ; "M"};
       {\ar @{-} "D" ; "J"}; {\ar @{-} "J" ; "K"}; {\ar @{-} "K" ; "C"}; {\ar @{-} "L" ; "M"}; {\ar @{-} "M" ; "A"};
       {\ar @{-} "L" ; "C"}; {\ar @{-} "E" ; "H"};
 \end{xy}};
 (25,5)*{\begin{xy}
       (0,0)*{}="A"; (5,0)*{}="B"; (5,5)*{}="C";  (10,5)*{}="D";
       (15,5)*{}="E"; (20,5)*{}="F";
       (20,10)*{}="G"; (15,10)*{}="H"; (10,10)*{}="I"; (0,15)*{}="J"; (5,15)*{}="K"; (5,10)*{}="L";
       (0,10)*{}="M"; (0,5)*{}="N";
       {\ar @{-} "A" ; "B"}; {\ar @{-} "B" ; "C"}; {\ar @{-} "N" ; "F"}; {\ar @{-} "F" ; "G"}; {\ar @{-} "G" ; "M"};
       {\ar @{-} "M" ; "J"}; {\ar @{-} "J" ; "K"}; {\ar @{-} "K" ; "C"}; {\ar @{-} "L" ; "M"}; {\ar @{-} "M" ; "A"};
       {\ar @{-} "L" ; "C"}; {\ar @{-} "E" ; "H"};{\ar @{-} "I" ; "D"};
 \end{xy}};
(55,0)*{\begin{xy}
       (-5,15)*{}="Q"; (0,15)*{}="R"; (-5,10)*{}="S";  (10,5)*{}="D"; (15,5)*{}="E"; (20,5)*{}="F";
       (20,10)*{}="G"; (15,10)*{}="H"; (10,10)*{}="I"; (10,15)*{}="J"; (5,15)*{}="K"; (5,10)*{}="L";
       (0,10)*{}="M";  (5,5)*{}="C";
       {\ar @{-} "Q" ; "J"};  {\ar @{-} "C" ; "F"}; {\ar @{-} "F" ; "G"}; {\ar @{-} "G" ; "M"};
       {\ar @{-} "D" ; "J"}; {\ar @{-} "S" ; "I"}; {\ar @{-} "K" ; "C"}; {\ar @{-} "R" ; "M"};
       {\ar @{-} "L" ; "C"}; {\ar @{-} "E" ; "H"}; {\ar @{-} "Q" ; "S"};
 \end{xy}};
(85,5)*{\begin{xy}
       (0,0)*{}="A"; (5,0)*{}="B"; (5,5)*{}="C";  (10,5)*{}="D"; (15,5)*{}="E"; (20,5)*{}="F";
       (20,10)*{}="G"; (15,10)*{}="H"; (10,10)*{}="I"; (10,15)*{}="J"; (15,15)*{}="K"; (5,10)*{}="L";
       (0,10)*{}="M"; (0,5)*{}="N";
       {\ar @{-} "A" ; "B"}; {\ar @{-} "B" ; "C"}; {\ar @{-} "N" ; "F"}; {\ar @{-} "F" ; "G"}; {\ar @{-} "G" ; "M"};
       {\ar @{-} "D" ; "J"}; {\ar @{-} "J" ; "K"}; {\ar @{-} "L" ; "C"}; {\ar @{-} "L" ; "M"}; {\ar @{-} "M" ; "A"};
       {\ar @{-} "L" ; "C"}; {\ar @{-} "E" ; "H"}; {\ar @{-} "K" ; "H"};
\end{xy}};
(15,-20)*{\begin{xy}
       (0,0)*{}="A"; (5,0)*{}="B"; (5,5)*{}="C";  (10,5)*{}="D"; (15,20)*{}="S"; (10,20)*{}="T";
       (15,15)*{}="R"; (15,10)*{}="Q"; (10,10)*{}="I"; (10,15)*{}="J"; (5,15)*{}="K"; (5,10)*{}="L";
       (0,10)*{}="M"; (0,5)*{}="N";
       {\ar @{-} "A" ; "B"}; {\ar @{-} "B" ; "C"}; {\ar @{-} "N" ; "D"};  {\ar @{-} "Q" ; "M"};
       {\ar @{-} "D" ; "T"}; {\ar @{-} "R" ; "K"}; {\ar @{-} "K" ; "C"}; {\ar @{-} "L" ; "M"}; {\ar @{-} "M" ; "A"};
       {\ar @{-} "L" ; "C"}; {\ar @{-} "Q" ; "S"}; {\ar @{-} "T" ; "S"};
\end{xy}};
(40,-20)*{\begin{xy}
       (0,15)*{}="D0"; (5,15)*{}="D1"; (10,15)*{}="D2"; (15,15)*{}="D3"; (20,15)*{}="D4";
       (0,10)*{}="C0"; (5,10)*{}="C1"; (10,10)*{}="C2"; (15,10)*{}="C3"; (20,10)*{}="C4";
        (0,5)*{}="B0";  (5,5)*{}="B1";  (10,5)*{}="B2";  (15,5)*{}="B3";  (20,5)*{}="B4";
        (0,0)*{}="A0";  (5,0)*{}="A1";  (10,0)*{}="A2";  (15,0)*{}="A3";  (20,0)*{}="A4";
       {\ar @{-} "A1";"A2"};{\ar @{-} "B0";"B4"};{\ar @{-} "C0";"C4"};{\ar @{-} "D1";"D2"};                      
       {\ar @{-} "B0";"C0"};{\ar @{-} "A1";"D1"};{\ar @{-} "A2";"D2"};{\ar @{-} "B3";"C3"};{\ar @{-} "B4";"C4"}; 
\end{xy}};
(65,-20)*{\begin{xy}
       (0,15)*{}="D0"; (5,15)*{}="D1"; (10,15)*{}="D2"; (15,15)*{}="D3"; (20,15)*{}="D4";
       (0,10)*{}="C0"; (5,10)*{}="C1"; (10,10)*{}="C2"; (15,10)*{}="C3"; (20,10)*{}="C4";
        (0,5)*{}="B0";  (5,5)*{}="B1";  (10,5)*{}="B2";  (15,5)*{}="B3";  (20,5)*{}="B4";
        (0,0)*{}="A0";  (5,0)*{}="A1";  (10,0)*{}="A2";  (15,0)*{}="A3";  (20,0)*{}="A4";
       {\ar @{-} "A2";"A3"};{\ar @{-} "B0";"B4"};{\ar @{-} "C0";"C4"};{\ar @{-} "D1";"D2"};                      
       {\ar @{-} "B0";"C0"};{\ar @{-} "B1";"D1"};{\ar @{-} "A2";"D2"};{\ar @{-} "A3";"C3"};{\ar @{-} "B4";"C4"}; 
\end{xy}};
(0,-45)*{\begin{xy}
       (0,15)*{}="D0"; (5,15)*{}="D1"; (10,15)*{}="D2"; (15,15)*{}="D3"; (20,15)*{}="D4";
       (0,10)*{}="C0"; (5,10)*{}="C1"; (10,10)*{}="C2"; (15,10)*{}="C3"; (20,10)*{}="C4";
        (0,5)*{}="B0";  (5,5)*{}="B1";  (10,5)*{}="B2";  (15,5)*{}="B3";  (20,5)*{}="B4";
        (0,0)*{}="A0";  (5,0)*{}="A1";  (10,0)*{}="A2";  (15,0)*{}="A3";  (20,0)*{}="A4";
       {\ar @{-} "A3";"A4"};{\ar @{-} "B0";"B4"};{\ar @{-} "C0";"C4"};{\ar @{-} "D0";"D1"};                      
       {\ar @{-} "B0";"D0"};{\ar @{-} "B1";"D1"};{\ar @{-} "B2";"C2"};{\ar @{-} "A3";"C3"};{\ar @{-} "A4";"C4"}; 
\end{xy}};
(25,-45)*{\begin{xy}
       (0,15)*{}="D0"; (5,15)*{}="D1"; (10,15)*{}="D2"; (15,15)*{}="D3"; (20,15)*{}="D4";
       (0,10)*{}="C0"; (5,10)*{}="C1"; (10,10)*{}="C2"; (15,10)*{}="C3"; (20,10)*{}="C4";
        (0,5)*{}="B0";  (5,5)*{}="B1";  (10,5)*{}="B2";  (15,5)*{}="B3";  (20,5)*{}="B4";
        (0,0)*{}="A0";  (5,0)*{}="A1";  (10,0)*{}="A2";  (15,0)*{}="A3";  (20,0)*{}="A4";
       {\ar @{-} "A2";"A4"};{\ar @{-} "B0";"B4"};{\ar @{-} "C0";"C3"};{\ar @{-} "D0";"D1"};                      
       {\ar @{-} "B0";"D0"};{\ar @{-} "B1";"D1"};{\ar @{-} "A2";"C2"};{\ar @{-} "A3";"C3"};{\ar @{-} "A4";"B4"}; 
\end{xy}};
(50,-45)*{\begin{xy}
       (0,15)*{}="D0"; (5,15)*{}="D1"; (10,15)*{}="D2"; (15,15)*{}="D3"; (20,15)*{}="D4";
       (0,10)*{}="C0"; (5,10)*{}="C1"; (10,10)*{}="C2"; (15,10)*{}="C3"; (20,10)*{}="C4";
        (0,5)*{}="B0";  (5,5)*{}="B1";  (10,5)*{}="B2";  (15,5)*{}="B3";  (20,5)*{}="B4";
        (0,0)*{}="A0";  (5,0)*{}="A1";  (10,0)*{}="A2";  (15,0)*{}="A3";  (20,0)*{}="A4";
       {\ar @{-} "A2";"A4"};{\ar @{-} "B0";"B4"};{\ar @{-} "C0";"C3"};{\ar @{-} "D1";"D2"};                      
       {\ar @{-} "B0";"C0"};{\ar @{-} "B1";"D1"};{\ar @{-} "A2";"D2"};{\ar @{-} "A3";"C3"};{\ar @{-} "A4";"B4"}; 
\end{xy}};
(75, -45)*{\begin{xy}
       (0,15)*{}="D0"; (5,15)*{}="D1"; (10,15)*{}="D2"; (15,15)*{}="D3"; (20,15)*{}="D4";
       (0,10)*{}="C0"; (5,10)*{}="C1"; (10,10)*{}="C2"; (15,10)*{}="C3"; (20,10)*{}="C4";
        (0,5)*{}="B0";  (5,5)*{}="B1";  (10,5)*{}="B2";  (15,5)*{}="B3";  (20,5)*{}="B4";
        (0,0)*{}="A0";  (5,0)*{}="A1";  (10,0)*{}="A2";  (15,0)*{}="A3";  (20,0)*{}="A4";
       {\ar @{-} "A2";"A4"};{\ar @{-} "B0";"B4"};{\ar @{-} "C0";"C3"};{\ar @{-} "D2";"D3"};                      
       {\ar @{-} "B0";"C0"};{\ar @{-} "B1";"C1"};{\ar @{-} "A2";"D2"};{\ar @{-} "A3";"D3"};{\ar @{-} "A4";"B4"}; 
\end{xy}};
\end{xy}
\]
\caption{The Eleven Unfolding Shapes of the Cube}
\label{11unfoldings}
\end{figure}

\begin{thm}
There are exactly eleven incongruent unfoldings of the cube.
\end{thm}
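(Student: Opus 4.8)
The plan is to sandwich the number of incongruent unfoldings between eleven from above and eleven from below. The upper bound is already in hand: as noted just before the statement, whenever a cube isometry carries one cutting pattern to another the resulting planar shapes are congruent, so the map sending a spanning tree to the congruence class of its unfolding is constant on the orbits of $\sym(\cube)$ acting on $\Upsigma$. Consequently the number of congruence classes of unfoldings is at most the number of orbits, which Proposition~\ref{p:nunf} evaluates to eleven. The entire remaining content of the theorem is the reverse inequality.

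For that reverse inequality I would exhibit eleven genuine unfoldings and verify that no two of them are congruent in the plane. Concretely, I take the eleven planar arrangements of Figure~\ref{11unfoldings}: each is a connected arrangement of six unit squares joined along edges that folds up into the cube, hence each is an unfolding in the sense defined earlier. It then suffices to check that these eleven shapes are pairwise incongruent, since eleven incongruent unfoldings force at least eleven congruence classes, and combining this lower bound with the upper bound of the previous paragraph yields exactly eleven. Note that this argument needs no matching of the eleven shapes to the eleven orbits; incongruence of the eleven representatives alone delivers the lower bound.

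The substance of the proof is therefore the pairwise-incongruence check, and this is where I expect the only real (if modest) work to lie. Rather than testing the eight symmetries of the square against every pair, I would record for each shape a descriptor that is manifestly preserved by rotations, reflections, and translations of the plane, and then read it off each picture. A convenient choice is the length of the longest straight strip of squares together with the manner in which the remaining squares are attached to that strip. Grouping the eleven shapes this way separates them into the families visible in Figure~\ref{11unfoldings}---those built on a strip of four squares, those built on a strip of three, and the more symmetric staircase-type shapes---and within each family the differing attachment positions of the extra squares distinguish the individual members. Any two shapes exhibiting different descriptors are incongruent, and a finite inspection confirms that the eleven shapes realize eleven distinct descriptors.

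The one point requiring care is that these are \emph{bona fide} unfoldings and not merely eleven distinct six-square shapes: I would confirm that each arrangement in Figure~\ref{11unfoldings} actually folds to a closed cube, equivalently that its edge-adjacency structure is a spanning tree of $\cube$ as guaranteed by Proposition~\ref{prop1}, which is immediate from the way each net is drawn. With both bounds established, the number of incongruent unfoldings of $\cube$ is exactly eleven.
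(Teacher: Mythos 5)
Your proposal is correct and follows essentially the same route as the paper: the upper bound of eleven comes from the orbit count in Proposition~\ref{p:nunf}, and the lower bound comes from exhibiting the eleven shapes of Figure~\ref{11unfoldings} and noting they are pairwise incongruent, with no need to match shapes to orbits. The only difference is one of emphasis---you spell out an invariant-based scheme (strip length plus attachment pattern) for the incongruence check, which the paper leaves to visual inspection of the figure.
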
 

We haven't forgotten the original problem of identifying shortest paths on the cube.  We tackle this question in \cite{goldsuzz},  where the number of unfoldings plays a role.  We also find a direct combinatorial way to enumerate unfoldings, which is in fact how the entries in Figure~\ref{11unfoldings} were obtained.

\subsection*{Looking back}

The mystery of the unfolding shapes of~$\cube$ has been resolved, and looking back we can discern the following path. Unfoldings corresponded to spanning trees of the cube graph---counted via the Matrix-Tree theorem.  Counting the incongruent unfolded shapes required Burnside's lemma, which in turn required counting spanning trees invariant under cube isometries. Algebra said that each element of an action of a subgroup of~$S_4 \times \zmod{2}$ on a seven-element set has to have fixed points, so invariant spanning trees had to have at least one invariant edge.  Geometry identified which isometries could have such an invariant edge, and graph theory winnowed the candidates and indicated that at most one edge could be invariant in any one invariant spanning tree.  The upshot was that the invariant spanning trees of only two cube isometries had to be analyzed by hand, which could be done systematically using the symmetry principles learned along the way.


\begin{thebibliography}{9}

\bibitem{adA58} A.D. Alexandrow, 
\emph{Konvexe Polyeder}. 
Akademie-Verlag, Berlin, 1958.

\bibitem{maA88} M.A. Armstrong, 
\emph{Groups and Symmetry}. 
Springer-Verlag, New York, 1988.

\bibitem{fBmP98} F. Buekenhout, M. Parker, 
The number of nets of the regular convex polytopes in dimension $\leq4$, 
\emph{Discrete Mathematics} {\bf 186} (1998) 69-94.

\bibitem{dmCmDhS79} D.M. Cvetkovi\'{c}, M. Doob, H. Sachs, 
\emph{Spectra of Graphs}. Third Edition. 
Johann Ambrosius Barth, Heidelberg, 1995.

\bibitem{jDbMP96}J. Dixon, B. Mortimer, 
\emph{Permutation Groups}. Graduate Texts in Mathematics, Vol. 163, Springer-Verlag, New York, 1996.

\bibitem{jGalC06} J. Gallian, 
\emph{Contemporary Abstract Algebra}. Eighth Edition. Cengage Learning, 2012.

\bibitem{goldsuzz} R. Goldstone, R. Suzzi Valli, 
Shortest paths on cubes, in preparation.

\bibitem{mJ1975} M. Jeger, 
\"{U}ber die anzahl der inkongruenten ebenen netze des w\"{u}rfels und des regul\"{a}ren oktaeders, 
\emph{Elemente der Mathematik} {\bf 30} (1975) 73-83.

\bibitem{jhvLrmW94} J.H. van Lint, R.M. Wilson, 
\emph{A Course in Combinatorics}. Second Edition.  
Cambridge Univ. Pr., Cambridge, UK 2001.

\bibitem{gcS75} G.C. Shephard, 
Convex polytopes with convex nets, 
\emph{Math. Proc. Camb. Phil. Soc.} {\bf 78} (1975) 389-403.

\bibitem{pT84} P. Turney, 
Unfolding the tesseract, 
\emph{J. Rec. Math.} {\bf 17} no. 1 (1984) 1-16.

\end{thebibliography}
\end{document}